\theoremstyle{plain}
\newtheorem{lemma}{Lemma}
\newtheorem{definition}{Definition}
\newtheorem{theorem}{Theorem}
\newtheorem{remark}{Remark}
\newtheoremstyle{derp}% <name>
{3pt}% <Space above>
{3pt}% <Space below>
{}% <Body font>
{}% <Indent amount>
{\upshape}% <Theorem head font>
{:}% <Punctuation after theorem head>
{.5em}% <Space after theorem headi>
{}% <Theorem head spec (can be left empty, meaning `normal')>
\theoremstyle{derp}
\newcommand{\R}{\mathbb{R}}
\newcommand{\Z}{\mathbb{Z}}
\newcommand{\N}{\mathbb{N}}
\newcommand{\sphere}{\mathbb{S}}
\newcommand{\Conv}[1]{\mathrm{Conv}(#1)}
\newcommand{\llb}{\llbracket}
\newcommand{\rrb}{\rrbracket}
\newcommand{\supp}{\mathrm{supp}}
\newcommand{\est}{\tikz[baseline=-3]{\node[draw,circle,inner sep = 1pt] () at (0,0) {\small$\rightarrow$};}}
\newcommand{\nth}{\tikz[baseline=-3]{\node[draw,circle,inner sep = 1pt] () at (0,0) {\small$\uparrow$};}}
\newcommand{\wst}{\tikz[baseline=-3]{\node[draw,circle,inner sep = 1pt] () at (0,0) {\small$\leftarrow$};}}
\newcommand{\sth}{\tikz[baseline=-3]{\node[draw,circle,inner sep = 1pt] () at (0,0) {\small$\downarrow$};}}
\title{Examples of Non-Busemann Horoballs}
\author{
Ville Salo \\
vosalo@utu.fi
}
\begin{document}
\maketitle

\begin{abstract}
In the f.g.\ setting, we construct horoballs which are ``not centered around a geodesic'' for generalized Heisenberg groups and all wreath products with an infinite acting group. That is, we find are limits of balls -- called \emph{horoballs} -- which are not increasing unions of balls around points on a geodesic -- called \emph{Busemann horoballs}. In the case of wreath products this follows from the stronger fact that there exist disconnected horoballs; on the lamplighter group we exhibit limits of Busemann horoballs which are not even coarsely connected. In the case of generalized Heisenberg groups, we use a symmetry argument instead; in fact for the $3$-dimensional Heisenberg group, at least under a particular generating set, we show that all horoballs are connected.

This note was written somewhat in isolation from the literature, in response to a recent preprint of Epperlein and Meyerovitch asking for examples of non-Busemann horoballs. It turns out that most of the results mentioned above are known: The results about connectedness follow from known results about almost convexity. The non-coarsely connected horoballs in the lamplighter group, and connected non-Busemann horoballs in the Heisenberg group, may be new statements, but here also we point out some strongly related literature.
\end{abstract}

\section{Introduction}

Let $G$ be a finitely-generated (f.g.) group and fix a generating set $S$ on $G$; we always assume $S$ is finite, \emph{symmetric} ($S = \{s^{-1} \;|\; s \in S\}$), and contains the identity $e = e_G$. An \emph{infinite geodesic} is $p : \N \to G$ such that $\forall n \in \N: d_S(p(0), p(n)) = n$, where $d_S$ denotes the word metric, and $B_{S,n}(g)$ is the $n$-ball around $g$ with respect to the word metric. On subsets of $G$ we use the Cantor topology.

It is natural to consider the horoballs of the group (seen as a metric space under the word metric), i.e.\ ``balls centered around a point at infinity''. There are many ways to formalize this. Our emphasis is on the following two.

\begin{definition}
Let $P \subset G$ with $P \neq \emptyset, G$. We say $P$ is
\begin{itemize}
\item a \emph{Busemann horoball} if there is an infinite geodesic $p$ such that $P = \bigcup_n B_{S,n}(p(n))$, and
\item a \emph{horoball} if $P = \lim_i B_{S,n_i}(g_i)$ for some $g_i \in G, n_i \in \N$ with $n_i \rightarrow \infty$.
\end{itemize}
\end{definition}

The notion of horoball is due to Gromov \cite{Gr81}, see \cite{EpMe20}. Our main motivation for studying horoballs is that the sets $\emptyset$ and $G$ are also limits of suitably positioned balls of increasing radii, so the horoballs and the sets $\emptyset$ and $G$ taken together form a $G$-subshift under translations, we call this the \emph{horoball subshift}.

These objects arise in some symbolic dynamical applications: In \cite{MeSa19} the author and Meyerovitch used them to study subshifts consisting of periodic points. In the recent preprint \cite{EpMe20}, Epperlein and Meyerovitch study the cellular automaton $f : \{0,1\}^G \to \{0,1\}^G$ defined by $f(x)_g = \max_{s \in S}(x_{gs})$, observing that certain properties of $G$ (such as the growth rate and amenability) can be deduced from the dynamical properties of $f$. The connection to horoballs is that the limit set of this cellular automaton consists of precisely the unions of horoballs, equivalently unions of Busemann horoballs.

It was asked in \cite{EpMe20} whether on discrete groups all horoballs are Busemann. We wrote this paper somewhat in isolation in response to this question, and while it seems some of our results are not directly in the literature, most things are at least a near miss to being known -- in particular, our additional discussion of ball distortion in the Heisenberg group, the most technically challenging part, can be directly deduced in the literature. As our main goal was to resolve the question of \cite{EpMe20}, we have decided to simply point out these connections rather than rewrite a shorter (possibly empty) paper with only the new observations.

First, the question of \cite{EpMe20} can be answered by appealing to the literature, as follows: A necessary condition on a horoball being Busemann is that is is connected (since Busemann horoballs are increasing unions of balls), and by compactness (Lemma~\ref{lem:DisconnectedHoroball}) horoballs being connected is equivalent to distortion of intrinsic metrics of balls being bounded. This property has been introduced previously by Cannon \cite{Ca87} under the name \emph{almost convexity}. It is known that Thompson's $F$ \cite{ClTa03}, Baumslag-Solitar groups $BS(1,m)$ \cite{MiSh97} and fundamental groups of manifolds with the Sol geometry \cite{ShSt95} are not almost convex. It follows that these groups admit disconnected horoballs, thus non-Busemann horoballs.

Say a subset of a group $G$ is \emph{$S$-connected} if it is a connected subset of the Cayley graph with generating set~$S$.

\begin{theorem}
\label{thm:Wreath}
Let $H$ be a nontrivial f.g.\ group, and $K$ any infinite f.g.\ group. Then for any symmetric generating sets $S, T \ni e_G$, the group $G = H \wr K$ admits a $T$-disconnected $S$-horoball.
\end{theorem}

As observed above, this implies that such wreath products admit non-Busemann horoballs.
This result is not new. By \cite{Ba61}, no wreath products of the above type are finitely-presented, and according to \cite{ClTa05} it follows from \cite{Ca87} that every almost convex group is finitely presented groups. Combining these facts with the observation above (Lemma~\ref{lem:DisconnectedHoroball}), one obtains Theorem~\ref{thm:Wreath}. We give a direct proof.

We show a slightly refined version of the above theorem for the lamplighter group. The \emph{lamplighter group} $\Z_2 \wr \Z$ is a particular case of the previous theorem. It can be thought of as $X_0 \times \Z$ where $X_0 = \{x \in \{0,1\}^\Z \;|\; \sum x \leq \infty\}$, with the multiplication rule is $(x,m)(y,n) = (x+\sigma^m(y),m+n)$, $\sigma^m(y)_i = y_{i-m}$. It admits a natural homomorphism $\pi(x,n) = n : \Z_2 \wr \Z \to \Z$. Consider the generating set $a = (0^\Z, 1), b = (x, 1) $ where $x$ is the characteristic function of $0$.

Say metric space $(M,d)$ is \emph{coarsely connected} if for some $t > 0$, for all $x,y \in M$ there exist $x_0 = x, x_1, ..., x_k = y$ such that $d(x_i, x_{i+1}) < t$ for all applicable $i$. We say a subset $A$ of a group $G$ is coarsely connected, if it is coarsely connected under the induced metric from $G$ for some finite generating set $S$. This is independent of $S$, and in fact equivalent to being $S$-connected for some finite generating set $S$.

\begin{theorem}
\label{thm:Lamplighter}
With the generating set $\{e, a, b, a^{-1}, b^{-1}\}$ for $\Z_2 \wr \Z$, there is a horoball which is not coarsely connected, and is a limit of Busemann horoballs.
\end{theorem}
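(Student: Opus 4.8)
The plan is to realize the half-space $P = \{(x,m) : m \ge 0\}$ as a Cantor limit of Busemann horoballs and to show that, although each approximating Busemann horoball is connected, the limit is not even coarsely connected. First I would record the easy reduction: each Busemann horoball $\bigcup_n B_{S,n}(p(n))$ is an increasing union of balls, hence is the Cantor limit of those balls and so is itself a horoball; since the horoball subshift is closed, any Cantor limit of Busemann horoballs lies in it, and is therefore a horoball as soon as one checks it is neither $\emptyset$ nor $G$. Thus it suffices to produce infinite geodesics $q_L$ with $P_{q_L} \to P$ and to verify that $P$ is not coarsely connected.

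The main computational tool I would set up is the ``span'' formula for the word metric. Writing $g = (x,m)$, $h = (y,p)$ and $D = \supp(x) \triangle \supp(y)$, one has $d_S(g,h) = 2(r - \ell) - |m - p|$, where $[\ell, r]$ is the smallest interval of $\Z$ containing $\{m,p\}$ together with $\{j, j+1 : j \in D\}$. This encodes that a word traces a walk on $\Z$ from $m$ to $p$ which toggles lamp $j$ precisely when it crosses the edge $\{j,j+1\}$; the shortest such walk sweeps $[\ell,r]$ once and is pinned at its two ends. In particular the homomorphism $\pi(x,m)=m$ is $1$-Lipschitz.

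Then I would take $q_L = b^L a^{-(n-L)}$: it runs right to position $L$ lighting every lamp in $[0,L)$, then turns and runs left forever without toggling, so $q_L(n) = (\chi_{[0,L)}, 2L - n)$ for $n \ge L$, where $\chi_A$ denotes the characteristic function of $A$. The span formula confirms each $q_L$ is an infinite geodesic. To compute the Busemann function $\beta_{q_L}(g) = \lim_n (d_S(g, q_L(n)) - n)$ (the limit exists since $d_S(g,q_L(n)) - n$ is nonincreasing) at a fixed $g = (x,m)$, observe that $\supp(x) \triangle [0,L)$ does not depend on $n$ while the lamplighter position $2L-n \to -\infty$; hence in the span formula $\ell \to -\infty$ and $r$ is eventually equal to $L$, giving $\beta_{q_L}(x,m) = -m$ for every $L > \max(\supp x \cup \{m\})$. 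Since this value is independent of the lamp configuration for large $L$, for each finite window $W$ we get $P_{q_L}\cap W = \{m \ge 0\}\cap W$ once $L$ is large, i.e. $P_{q_L} \to P = \{(x,m):m\ge 0\}$ in the Cantor topology.

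The remaining point, which I regard as the crux, is that $P = \{m \ge 0\}$ is not coarsely connected. The idea is that inside $P$ the lamplighter position never drops below $0$, so any lamp at a negative position is ``frozen'': it cannot be changed along a short step that stays in $P$. Quantitatively, if $(z,s)\to(z',s')$ is a step with $s,s'\ge 0$ and $-K \in \supp(z)\triangle\supp(z')$, then in the span formula $\ell \le -K$ and $r \ge \max(s,s')\ge 0$, so $d_S \ge 2K - |s-s'| \ge 2K - d_S$, forcing $d_S \ge K$. Consequently any path inside $P$ joining $(\chi_{\{-K\}},0)$ to $(0,0)$ — both of which lie in $P$ — must change the lamp at $-K$ and hence contains a step of length at least $K$. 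As $K$ is arbitrary, no single $t$ can make $P$ coarsely connected. I expect the two places needing the most care to be the confirmation that the limiting set is exactly $\{m\ge 0\}$ including the boundary slice $m=0$ (which is forced because $\beta_{q_L}(x,m)=-m$ is independent of the lamps for large $L$), and the step-length estimate above, which is exactly what promotes plain disconnectedness to the failure of coarse connectedness.
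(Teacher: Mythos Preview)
Your proof is correct and the overall strategy parallels the paper's, but with two noteworthy differences.

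First, the target set is different: you take $P=\{(x,m):m\ge 0\}$, whereas the paper takes the complementary half-space $\{g:\pi(g)<0\}$. Both work, and the non-coarse-connectedness argument is the same in spirit (lamps at positions beyond the allowed head range are frozen). Your quantitative version via the span formula, showing that any single step toggling a lamp at $-K$ inside $P$ has length at least $K$, is clean.

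Second, the mechanism for exhibiting $P$ as a limit of Busemann horoballs differs. The paper first writes down the full sofic geodesic $\omega$-language, then analyzes specific origin-grazing balls and argues that the corresponding finite geodesics extend to infinite ones. You instead compute the Busemann functions $\beta_{q_L}$ directly from the span formula and read off the limit. Your route is more direct; the paper's gives a global picture of all geodesics as a byproduct.

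One small technical correction: the claim that $r=L$ once $L>\max(\supp x\cup\{m\})$ is off by one. You need $L-1\notin\supp x$, i.e.\ $L>\max(\supp x)+1$; otherwise $L-1$ may lie in $\supp x\cap[0,L)$ and drop out of $D$, giving $r<L$. For instance, with $g=(\chi_{\{8,9\}},-3)$ and $L=10$ one gets $r=8$ and $\beta_{q_{10}}(g)=-1\le 0$, so $g\in P_{q_{10}}$ even though $m=-3<0$. This does not affect the limit argument, since for each fixed $g$ the identity $\beta_{q_L}(g)=-m$ does hold for all sufficiently large $L$, which is all you use.
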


We note that it is certainly possible that a horoball is coarsely connected yet not connected, for example this happens when the proof of Theorem~\ref{thm:Wreath} is applied in the most obvious way to the lamplighter group. Thus, this theorem may be new. However, we note that \cite{ClTa05} performs a rather similar explicit study of the basic lamplighter group (under the other natural generating set), giving a formula for geodesics and explicit examples of dead ends.

An important symbolic dynamical consequence of Theorem~\ref{thm:Lamplighter} is that it implies that Busemann horoballs together with $\emptyset$ and $G$ need not form a subshift. We note the subtlety that a limit of Busemann horoballs can be assumed to come from a converging sequence of geodesics (by Lemma~\ref{lem:Grazing} and a compactnessa argument), but the map $\eta$ from geodesics to corresponding Busemann horoballs is not continuous. It is only lower semicontinuous (in the sense of \cite{Ku32}): if we have pointwise convergence $p_i \rightarrow p$ for geodesics $p_i, p$ (say, all starting from the origin) and $\eta(p_i) \rightarrow P$, then $P \supset \eta(p)$, thus limits of Busemann horoballs can be smaller than one would expect, and this is exactly what happens in the case of the previous theorem.

We also give an example of another type. The \emph{generalized (discrete) Heisenberg group} $H = H_{2n+1} \subset \Z^n \times \Z^n \times \Z$ containing elements $(a,b,c)$ with $2|c \iff 2|a \cdot b$. with product $(a,b,c) (a',b',c') = (a+a', b+b', c+c' + (ab' - a'b))$. This is a slightly nonstandard representation, obtained from the one in exponential coordinates, by multiplying the $c$-values by $2$ so they are integers. The case $H_3$ is usually called the discrete Heisenberg group. Let us say a generating set $S$ for $H_{2n+1}$ is \emph{projection-symmetric} if it is fixed under the \emph{projection flip} $(a,b,c) \mapsto (-a,-b,c)$.

\begin{theorem}
\label{thm:Heisenberg}
Let $S$ be any symmetric projection-symmetric generating set for the generalized Heisenberg group. Then there is an $S$-horoball $H$ which is not a limit point of Busemann horoballs.
\end{theorem}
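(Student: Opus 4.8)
The plan is to exploit the projection flip $\phi\colon(a,b,c)\mapsto(-a,-b,c)$, which is a group automorphism (a direct check against the product rule) and fixes $S$ by hypothesis, hence is an isometry of the Cayley graph; crucially $\phi$ fixes the center $Z=\{(0,0,c)\}$ pointwise and negates the abelianization $\rho\colon(a,b,c)\mapsto(a,b)$, a surjective homomorphism $H_{2n+1}\to\Z^{2n}$ that is $1$-Lipschitz for the induced word metrics. (I write $P$ for the horoball, to avoid clashing with the group $H$.) First I would build a flip-symmetric horoball centered on the center: pick $n_i\to\infty$ and $h_i$ with $(0,0,h_i)$ at distance exactly $n_i$ from $e$ — possible since the center is quadratically distorted, $d_S(e,(0,0,h))\asymp\sqrt{|h|}$ — and pass to a convergent subsequence of the balls $B_{S,n_i}((0,0,h_i))$ to define $P$. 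Each ball is centered at a $\phi$-fixed point, hence is $\phi$-invariant, so $P=\phi(P)$; and since $e\in B_{S,n_i}((0,0,h_i))$ while points far below the center are excluded, $P$ is a proper nonempty horoball.

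The key structural property of $P$ is that it is $\rho$-unbounded in every direction. Using the distance estimate one checks that for each fixed $w\in\Z^{2n}$ there is a $c$ with $(w,c)\in B_{S,n_i}((0,0,h_i))$ once $n_i$ is large, so that $P$ meets $\{x:\langle\rho(x),u\rangle\ge t\}$ for every unit vector $u$ and every $t$. Flip-symmetry records this balance cleanly: $P$ is unbounded in a direction $v$ if and only if it is unbounded in $-v$. I would then contrast this with a deficiency property of Busemann horoballs.

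The main geometric input, and the step I expect to be the chief obstacle, is that every infinite geodesic escapes the center with uniformly positive $\rho$-speed, and consequently every Busemann horoball $\eta(p)$ is \emph{direction-deficient}: there is a unit vector $u_p$ and a constant $R_0$, uniform over geodesics based at $e$, with $\eta(p)\subseteq\{x:\langle\rho(x),u_p\rangle\le R_0\}$. The mechanism is that if $\rho(p(n))$ grows linearly in a direction $v_p$, then the Busemann function $\beta_p$ increases without bound as one moves in the backward $\rho$-directions, confining the $\rho$-shadow of $\eta(p)$ to a half-space. The naive bound $d_S(x,y)\ge|\rho(x)-\rho(y)|$ is too weak for this, so one genuinely needs the Heisenberg distance/distortion estimates — that no geodesic is asymptotically vertical and that each moves at positive linear speed in the abelianization. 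This is precisely the ball-distortion analysis which, as noted in the introduction, can be extracted from the literature.

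Finally I would close by a compactness argument. Assuming for contradiction that $P=\lim_i\eta(p_i)$, I first reduce (via Lemma~\ref{lem:Grazing} and compactness) to geodesics $p_i$ based at $e$, then pass to a subsequence so that the deficiency directions $u_{p_i}$ converge to some $u_\infty$ on the unit sphere. Because $R_0$ is uniform, any point $x$ with $\langle\rho(x),u_\infty\rangle$ sufficiently large is excluded from $\eta(p_i)$ for all large $i$, hence from the limit $P$; thus $P$ is $\rho$-bounded in the direction $u_\infty$. This contradicts the fact that $P$ is $\rho$-unbounded in every direction, so $P$ is not a limit point of Busemann horoballs. This is consistent with the lower-semicontinuity remark preceding the theorem: a limiting geodesic would only force $P\supseteq\eta(p)$, which by flip-symmetry gives unboundedness in both $\pm v_p$, so the obstruction necessarily comes from the uniform deficiency of the approximants rather than from any single limit geodesic.
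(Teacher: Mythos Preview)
Your proposal is correct and follows essentially the same strategy as the paper: construct a horoball $P$ as a limit of balls centered on the center $(0,0,h_i)$, observe it is invariant under the projection flip, show that (limits of) Busemann horoballs grazing a fixed point have projection confined to an affine half-space of uniformly bounded stretch (your ``direction-deficiency'' with uniform $R_0$ is exactly the paper's key lemma), and derive a contradiction from flip-symmetry plus the unboundedness of $\rho(P)$. Your explicit compactness step on the sphere of directions $u_{p_i}\to u_\infty$ is precisely what the paper packages as ``this property is preserved under limits.''

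One small caution: your sentence ``for each fixed $w$ there is a $c$ with $(w,c)\in B_{S,n_i}((0,0,h_i))$ once $n_i$ is large'' is true with $c=h_i$, but that $c$ varies with $i$, so it does not directly give a \emph{fixed} $(w,c)\in P$; the quadratic distortion estimate alone does not separate $|(w,c-h_i)|_S$ from $|(0,0,h_i)|_S$ by a definite sign. The paper handles this step equally tersely, invoking instead that $P$ contains a Busemann horoball $\eta(q)$ (since horoballs are unions of Busemann horoballs) whose projection is a full half-space, which together with flip-symmetry immediately rules out containment in any slab; you may want to route your argument through that observation rather than through $\rho(P)=\Z^{2n}$.
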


This result may be new, since it cannot be deduced from connectedness issues (see below), and we are not aware of other previously studied concepts that would directly imply such a result. However, we note that the \emph{tools} we develop for its proof are likely not new. The essential idea is a (partial) classification of geodesics, and although we did not find our exact statements in the literature, the large scale behavior of geodesics in these groups seem to be well-understood, see \cite{DuMo14} or \cite{Pa83}.

Not surprisingly, the non-Busemann horoballs are obtained by taking a (subsequence of a) non-geodesic path, and balls around its elements. The obvious path to use is the one moving in the central direction, and we give an explicit computation of a non-Busemann horoball that can be obtained like this: $H_3 \setminus (\{0\}^2 \times \N)$ is a non-Busemann horoball under the generating set
\[ S = \{(0,0,0), (1,0,0), (0,1,0), (-1,0,0), (0,1,0)\}. \]
Note that this set is connected, giving examples of non-Busemann horoballs which are connected.

In fact, in the case of the three-dimensional Heisenberg group, under this generating set $S$, all horoballs are connected:

\begin{theorem}
\label{thm:HoroballsConnected}
For the Heisenberg group $H_3$ under the generating set $S$ from above, every horoball is $S$-connected.
\end{theorem}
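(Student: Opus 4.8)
The plan is to reduce the statement to \emph{almost convexity} of $(H_3,S)$ in the sense of Cannon~\cite{Ca87} and then verify that property directly. Recall that $(G,S)$ is almost convex if there is a constant $C$ so that any two points $x,y$ with $d_S(e,x)=d_S(e,y)=n$ and $d_S(x,y)\le 2$ can be joined by a path of length at most $C$ that stays inside $B_{S,n}(e)$. By Lemma~\ref{lem:DisconnectedHoroball}, the connectedness of every horoball is equivalent to a uniform bound on the distortion of the intrinsic ball metrics, and almost convexity gives exactly such a bound: if every pair of $2$-close sphere points is connected inside the ball by a path of bounded length, the intrinsic and extrinsic metrics on $B_{S,n}(e)$ differ by a constant factor, uniformly in $n$. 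Since this uniform bound is preserved under the limits defining horoballs (the compactness argument in Lemma~\ref{lem:DisconnectedHoroball}), it suffices to prove that $(H_3,S)$ is almost convex.

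To analyze the metric I would use the area interpretation of the central coordinate, which also underlies the geodesic classification developed for Theorem~\ref{thm:Heisenberg}. Right-multiplying $(a,b,c)$ by the horizontal generators changes $c$ by $-b$ and $+a$ respectively, so a word in $S$ is a lattice path in the $(a,b)$-plane from $(0,0)$ to $(a,b)$, and the final $c$-coordinate records (twice) the signed area swept by that path. Consequently $d_S\big(e,(a,b,c)\big)$ is the minimal number of unit horizontal and vertical steps in a lattice path from $(0,0)$ to $(a,b)$ that encloses signed area $c$, an isoperimetric lattice-path quantity. I would first record the resulting near-closed-form length estimate together with the description of which lattice paths are geodesic, which is precisely the partial classification already in hand.

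With this in place, the core of the argument is a \emph{filling} step. Given $x,y$ on the sphere of radius $n$ with $d_S(x,y)\le 2$, I would fix geodesic lattice paths $\gamma_x,\gamma_y$ realizing them and interpolate between them through a bounded number of geodesic (or near-geodesic) paths, each obtained from the previous by a local surgery — shifting a corner, or trading one small loop for another — that changes the endpoint by a bounded amount while never pushing the distance to $e$ above $n$. Because $x$ and $y$ differ by at most two generators, their $(a,b)$-projections and enclosed areas differ by $O(1)$, so only an $O(1)$ correction of the area coordinate is required, and this can be absorbed by local modifications concentrated near the ends of the path.

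The main obstacle I expect is the fine control of the central coordinate under these local moves: unlike in the abelian directions, a small perturbation of the path can change the enclosed area by an amount proportional to the path's extent, so the surgeries must be chosen to keep the area change $O(1)$ while remaining inside $B_{S,n}(e)$. Establishing that such area-neutral (or area-$O(1)$) local moves always connect any two $2$-close geodesic endpoints — equivalently, that the geodesic combinatorics forces no large detours at the boundary of the ball — is the crux, and is exactly where the isoperimetric length estimate and the geodesic classification are needed. Once almost convexity is established, Theorem~\ref{thm:HoroballsConnected} follows immediately from Lemma~\ref{lem:DisconnectedHoroball}.
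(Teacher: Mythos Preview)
Your reduction to almost convexity via Lemma~\ref{lem:DisconnectedHoroball} is correct and is exactly how the paper begins: it suffices to show the distortion functions of balls are pointwise bounded. But from there your proposal is a plan, not a proof. You correctly identify the obstacle --- a local move on a lattice path can change the enclosed area by an amount comparable to the path's extent --- yet you do not explain how your ``local surgeries'' avoid this; you simply assert that $O(1)$ area corrections suffice and that they ``can be absorbed by local modifications concentrated near the ends of the path.'' That is precisely the statement that needs proof, and without a concrete description of the moves and a verification that each intermediate element stays in $B_{S,n}$, the filling step is missing.

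The paper's mechanism is different from surgery on a single path. It fixes for each $g$ a \emph{canonical geodesic} $p_g$ to the origin, built from an explicit piecewise-linear ``pseudogeodesic'' depending on which of three isoperimetric regimes $(a,b,c)$ lies in (this uses the optimization formula of Lemma~\ref{lem:WordMetric}). The key observation is then: if the canonical geodesics for two $m$-close points $g,g'$ stay within distance $r$ of each other for their first $k$ steps, with $k\ge C\sqrt{rk}$, then $g$ and $g'$ are connected inside $B_{S,n}$ --- walk $k$ steps down both geodesics (now you are depth $k$ in the ball) and correct the accumulated height difference by tracing out a square of side $O(\sqrt{rk})\le k$. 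The bulk of the work is a case analysis (same quadrant versus adjacent quadrants; large versus small $|a|+|b|$; height large or small relative to $|a|+|b|$) showing that the canonical geodesics of nearby points really do fellow-travel long enough for this to apply, with a separate argument near the central axis using the column-height function $\eta_n(x,y)$. Your interpolation idea does not supply any of this structure; if you want to pursue it, you would need to exhibit the actual sequence of intermediate geodesics and verify each stays in the ball, which in the end is likely to be comparable in length to the paper's (or Shapiro's) argument.
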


This result is not new: As discussed above, this is equivalent to almost convexity, and almost convexity has been established in \cite{Sh89} for $H_3$ under the same generating set (and for some related groups as well). It has been shown that \cite{Th92} that for generalized Heisenberg groups, almost convexity can depend on the generating set.

In our proof, we use an explicit natural ``optimization formula'' for the word metric w.r.t.\ $S$ (which can be turned into a direct formula), to ensure that geodesics look like one would expect. A formula has been given previously in \cite{Bl03}. As discussed above, the shapes of geodesics have been studied in the literature in much more general contexts \cite{DuMo14,Pa83}.

\section{Definitions}

On $\mathcal{P}(G) = 2^G$ the topology is Cantor with the subbasis of cylinders $[g] = \{H \subset G \;|\; g \in H\}$. For a generating set $S$ on $G$, we always assume $S$ is \emph{symmetric}, $S = \{s^{-1} \;|\; s \in S\}$, and contains the identity $e = e_G$. A group $G$ becomes a (topologically discrete) metric space under the left-translation invariant \emph{word metric} $d(g,h) = d_S(g, h) = |g^{-1}h|$ where $|g| = |g|_S = \min \{\ell \;|\; \exists (s_i)_i \in S: g = s_1 s_2 \cdots s_\ell\}$, with balls $B_{S,r}(g) = \{h \in G \;|\; d(g, h) \leq r\}$.

Write $\llb a, b \rrb = [a, b] \cap \Z$, and similar notation is used for other types of intervals.
A \emph{path in $H \subset G$ (of length $k$ from $g \in H$ to $g' \in H$)} is $(g_0, g_1, g_2, ..., g_k)$ with $g_i \in H$ and $g_{i+1} \in g_i S$ for applicable $i$, $g_0 = g, g_k = g'$, and a \emph{path} is just a path in $G$. We also write paths as functions, $p : \llb 0, n \rrb \to G$ or $p : \N \to G$. We say a set $H \subset G$ is ($S$-)\emph{connected} if there is a path in $H$ between any two elements of $H$. A path is \emph{geodesic} if its length is equal to the distance between its endpoints. An \emph{infinite geodesic} is $p : \N \to G$ such that $p|_{\llb 0,n \rrb}$ is a geodesic for all $n \in \N$. The projection of a path with a group homomorphism is obtained by projecting the elements on the path.

Paths beginning at the origin of the group can be obtained by taking partial products of a sequence over a generating set: if $x \in S^n$ (resp. $x \in S^\omega$), we write $\hat x$ for the path $p(i) = x(0) x(1) x(2) \cdots x(i-1)$, with the interpretation $p(0) = e_G$. Conversely, we sometimes want to see paths as (possibly infinite) words over the generators, and we simply speak of paths \emph{written in terms of generators}. Write $|p|$ for the length of a finite path, i.e.\ $|p| = n$ where $p : \llb 0, n\rrb \to G$. %Write $p \cdot q$ for composition of finite paths, in the obvious sense, when defined. The path $p$ is traversed first and must be finite, while the path $q$ may be infinite. For a finite path $q$, $q^{-1}$ denotes the inverted path $q^{-1}(i) = q(|p|-1-i)^{-1}$. A path $p$ is \emph{contained in $A$} if it is a path in $A$, i.e.\ $p(i) \in A$ for all applicable $i$. 

Let $M_1, M_2$ be metric spaces under metrics $d_1, d_2$, with $M_1 \subset M_2$ (but $d_1$ not necessarily the restriction of $d_2$). Write
\[ \Delta^{d_2}_{d_1}(\ell) = \sup \{d_1(a, b) \;|\; a, b \in M_1, d_2(a, b) \leq \ell \} \in \R_+ \cup \{\infty\} \]
for the \emph{distortion function} of $M_1$ in $M_2$. We assume $d_1$ measures an intrinsic distance in $M_1$, and always have $d_1 \geq d_2$. In most of our applications, $M_2$ is a group and $M_1$ is a ball or a horoball of $M_2$ with respect to some generating set. Since balls are connected, the distortion functions of balls take only finite values.

We use the standard $O(\cdot)$ and $\Omega(\cdot)$-notations and conventions. We specify in words what is thought of as fixed, for the purpose of invisible constants. We write $a \pm O(1)$ when $a + c$ where $c \in \Z$ and $c = O(1)$. 

We assume some basic knowledge of convex geometry, but give the main definitions. The \emph{convex hull} of $A \subset \R^d$ is the smallest convex set containing $A$. The convex hull of a finite set $A$ is same as the set of convex combinations of points in $A$. For $A \subset \R^d$, write $A^k$ for the iterated Minkowski sum $A^k = A + A^{k-1}$, observe that if $A$ is convex then $A^k = kA$. The following is a special case of the Shapley-Folkman lemma:

\begin{lemma}
\label{lem:SF}
For any finite set $A \subset \R^d$ and $k \geq d$, we have $k\Conv{A} \subset A^{k-d} + d\Conv{A}$.
\end{lemma}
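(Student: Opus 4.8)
The plan is to deduce this from the general Shapley--Folkman lemma together with a short counting argument that repackages the surplus copies of $A$ into the convex term. Recall the general statement: if $A_1, \dots, A_k \subseteq \R^d$ and $x \in \sum_{i=1}^k \Conv{A_i}$, then $x = \sum_{i=1}^k x_i$ with each $x_i \in \Conv{A_i}$ and with $x_i \in A_i$ for all but at most $d$ of the indices~$i$. I would apply this with every $A_i$ equal to $A$. Since $\Conv{A}$ is convex, the excerpt's own observation gives $\sum_{i=1}^k \Conv{A} = (\Conv{A})^k = k\Conv{A}$, so an arbitrary point of $k\Conv{A}$ admits such a decomposition.

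Now fix $x \in k\Conv{A}$ and such a decomposition $x = \sum_{i=1}^k x_i$, and let $m$ be the number of indices with $x_i \in A$; by Shapley--Folkman $k - m \leq d$, i.e.\ $m \geq k-d$. Grouping the terms, the $A$-summands contribute a point of $A^m$ and the remaining $k-m$ summands contribute a point of $(k-m)\Conv{A}$. The key manipulation is to split $A^m = A^{k-d} + A^{m-(k-d)}$ (valid because $m \geq k-d$) and then use $A \subseteq \Conv{A}$, which gives $A^{m-(k-d)} \subseteq (m-(k-d))\Conv{A}$. Collecting the convex pieces, the total number of copies of $\Conv{A}$ is $(m-(k-d)) + (k-m) = d$, so $x \in A^{k-d} + d\Conv{A}$, as desired.

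If a self-contained argument is preferred, I would instead prove the needed instance of Shapley--Folkman directly via the conical form of Carath\'eodory's theorem in $\R^{d+k}$: lift each usable generator $a \in A$ from block $i$ to the vector $(a, e_i) \in \R^d \times \R^k$, observe that $(x, \mathbf{1})$ lies in the conical hull of these lifts, and extract a nonnegative representation using at most $d+k$ linearly independent lifts. Each block must contribute at least one lift (to account for the $1$ in its tail coordinate), so at least $k$ lifts are used and hence at most $d$ blocks contribute more than one; a block contributing a single lift forces the corresponding $x_i \in A$. This recovers the decomposition used above.

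The only genuinely delicate point is the bookkeeping in the second paragraph --- the passage from ``$m \geq k-d$ summands in $A$ and $k-m$ in $\Conv{A}$'' to the clean bound ``$A^{k-d} + d\Conv{A}$'' --- which hinges on the arithmetic identity $(m-(k-d))+(k-m)=d$ and on $A \subseteq \Conv{A}$. Everything else is either a direct appeal to a standard lemma or the routine fact that Minkowski sums respect inclusions.
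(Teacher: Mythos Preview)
Your proposal is correct, and it is exactly the approach the paper intends: the paper states the lemma as ``a special case of the Shapley--Folkman lemma'' and gives no further proof, so your derivation from the general Shapley--Folkman statement (together with the bookkeeping identity $(m-(k-d))+(k-m)=d$) is precisely what is being invoked. The optional self-contained Carath\'eodory-in-$\R^{d+k}$ argument you sketch is also a standard route to the same conclusion and would serve equally well.
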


An \emph{affine half-space} is $\{v \in \R^d \;|\; v \cdot u \geq r\}$ for some $r \in \R$ and $u \in \R^d \setminus \{0^d\}$. The \emph{stretch} of an affine half-space is the minimal $|r|$ such that this holds for some unit vector $u$.

The \emph{first quadrant} of $\Z^2$ is $\N^2$, and the second, third and fourth quadrants are obtained by counterclockwise rotation by $\pi/2$, $\pi$ and $3\pi/2$, respectively. (Note that the quadrants intersect.)

\section{Horoballs for wreath products}

\subsection{General wreath products}

We need a simple compactness observation. In the following lemma, we consider balls with respect to a metric $S$, while distances are measured with respect to a possibly differen generating set $T$.

\begin{lemma}
\label{lem:DisconnectedHoroball}
Let $G$ be a f.g.\ group with symmetric generating sets $S, T \ni e_G$ and $d = d_T$ the word metric from $T$. For each $n$ let $d_n$ be the path metric for the ball $B_{S,n}(e_G)$, as an induced subgraph of the $T$-Cayley graph of $G$.
The following are equivalent:
\begin{itemize}
\item for all $\ell$, $\{\Delta^{d}_{d_n}(\ell) \;|\; n \in \N\}$ is infinite,
\item there is a $T$-disconnected $S$-horoball.
\end{itemize}
\end{lemma}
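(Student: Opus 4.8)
The plan is to prove both implications by a compactness argument in the Cantor topology on $2^G$, exploiting that Cantor convergence $B_{S,n_i}(g_i) \to P$ means agreement with $P$ on every fixed finite subset of $G$ for all large $i$. Throughout I use that left translation by any element is an automorphism of the $T$-Cayley graph, so it preserves $d_T$, preserves the induced-subgraph path metrics $d_n$ under the obvious identification of a ball $B_{S,n}(g)$ with $B_{S,n}(e_G)$, and carries $S$-horoballs to $S$-horoballs.

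For the implication ``disconnected horoball $\Rightarrow$ unbounded distortion'', I would start from a $T$-disconnected $S$-horoball $P = \lim_i B_{S,m_i}(h_i)$ with $m_i \to \infty$, pick two of its $T$-components, and choose $a, b \in P$ in these distinct components realizing their minimal $T$-distance $g = d_T(a,b)$, which is an attained integer $\geq 2$. Translating by $a^{-1}$ I may assume $a = e_G$; writing $b' = a^{-1}b$, the points $e_G$ and $b'$ lie in different components of the horoball $a^{-1}P$ and $|b'|_T = g$. For large $i$ the ball $B_{S,m_i}(a^{-1}h_i)$ contains both $e_G$ and $b'$, and I claim their $T$-path distance inside this ball tends to $\infty$: a path of length $\leq L$ between them would stay inside the finite set $B_{T,L}(e_G)$, on which the ball eventually agrees with $a^{-1}P$, so the path would lie in $a^{-1}P$ and join $e_G$ to $b'$ there, contradicting that they lie in different components. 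Transporting this pair back to $B_{S,m_i}(e_G)$ by left translation (preserving $d_T$ and the intrinsic metric) exhibits a pair at $T$-distance $g$ whose $d_{m_i}$-distance diverges, so $\Delta^{d}_{d_{m_i}}(g) \to \infty$; by monotonicity of $\ell \mapsto \Delta^{d}_{d_n}(\ell)$ this yields unboundedness at every scale $\ell \geq g$.

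For the converse I need only a single scale $\ell$ at which the distortion is unbounded, which the hypothesis supplies. Since each ball is finite, every $\Delta^{d}_{d_n}(\ell)$ that is finite is attained, and an infinite value set forces infinitely many distinct finite values, hence a subsequence $n_i \to \infty$ with pairs $a_i, b_i \in B_{S,n_i}(e_G)$ satisfying $d_T(a_i,b_i) \leq \ell$ and $d_{n_i}(a_i,b_i) \to \infty$. I translate each ball by $a_i^{-1}$, sending $a_i \mapsto e_G$ and $b_i \mapsto a_i^{-1}b_i \in B_{T,\ell}(e_G)$; by pigeonhole I pass to a subsequence on which this image is a fixed $b'$, and by compactness of $2^G$ to a further subsequence on which $B_{S,n_i}(a_i^{-1}) \to P$. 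Then $P$ is an $S$-horoball (the radii diverge), it contains $e_G$ and $b'$ and is therefore nonempty, and the same finite-window argument shows $e_G, b'$ cannot be joined by a $T$-path inside $P$: such a finite path would appear inside $B_{S,n_i}(a_i^{-1})$ for large $i$ and, translated back by $a_i$, would bound $d_{n_i}(a_i,b_i)$, a contradiction. Thus $P$ is a $T$-disconnected $S$-horoball (in particular $P \neq G$).

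I expect the main obstacle to be bookkeeping rather than any deep idea: keeping the two generating sets $S$ (defining the balls) and $T$ (defining adjacency and distance) cleanly separated, and correctly transporting the intrinsic path metrics under left translation so that statements about balls centered at arbitrary points reduce to the functions $\Delta^{d}_{d_n}$, defined only for balls centered at $e_G$. A secondary point worth flagging is the quantifier over $\ell$: the condition cannot hold literally for small $\ell$ (for instance $\Delta^{d}_{d_n}(1) \leq 1$ always, since an induced subgraph contains the edge between any two of its adjacent vertices), so ``for all $\ell$'' must be read as ``for all sufficiently large $\ell$''; monotonicity in $\ell$ makes this equivalent to unboundedness at a single scale, which is precisely what the two directions respectively produce and consume.
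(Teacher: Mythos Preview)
Your proof is correct and follows essentially the same compactness-and-translation argument as the paper: extract pairs witnessing large distortion, recentre by left translation, pass to a Cantor limit, and use that any fixed finite $T$-path is eventually visible in the approximating balls. Your observation about the quantifier is a genuine catch: the statement as written (``for all $\ell$'') is literally never satisfied since $\Delta^{d}_{d_n}(1)\le 1$, and the paper's own proof makes clear the intended condition is ``for some $\ell$'' (equivalently, by monotonicity in $\ell$, ``for all sufficiently large $\ell$'').
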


\begin{proof}
If $\Delta^{d}_{d_n}(\ell)$ is unbounded for fixed $\ell$, then for all $m$ we find $n_m$ and $a_m, b_m \in G$ such that $d(a, b) \leq \ell$ and $d_{n_m}(a_m, b_m) \geq m$. Note that then $B_{S,n_m}(e_G)$ cannot contain the $\ell$-ball around $a_m$, as soon as $m > \ell$. Then $B_{S,n_m}(a^{-1}_m)$ contains $e_G$ and $a^{-1}_m b_m$ (which has $T$-word norm at most $\ell$), but does not contain the $\ell$-ball around $e_G$. Thus any limit point as $m \rightarrow \infty$ is an $S$-horoball $P$. Let $a_m^{-1} b_m \rightarrow c$ along the corresponding subsequence. If $e_G$ were $T$-connected to $c$ in $P$ with distance $t$, then by the definition of the topology on $2^G$ these paths would be contained in $B_{S,n_m}(a^{-1}_m)$ for large enough $m$, and translating them by $a_m$ would give $d_{n_m}(a_m, b_m) \leq t$ for large enough $m$, a contradiction.

Conversely, if $\Delta^{d}_{d_n}(\ell) = O(1)$ for all $\ell$, and $P = \lim_i B_{S,n_i}(g_i)$ for some $g_i \in G, n_i \in \N$ with $n_i \rightarrow \infty$, consider any $a, b \in P$. Then $a, b \in B_{S,n_i}(g_i)$ for large enough $i$, and
\[ d_i(g_i^{-1} a, g_i^{-1} b) \leq \Delta^d_{d_i}(d(g_i^{-1} a, g_i^{-1} b)) = \Delta^d_{d_i}(d(a, b)) = O(1), \]
so there is a $T$-path of length $O(1)$ from $a$ to $b$ inside $B_{S,n_i}(g_i)$. There are finitely many possible paths, so in some subsequence the same path $p$ is used in all $B_{S,n_i}(g_i)$. Then $p$ is contained in $P$.
\end{proof}

We summarize the above lemma by saying that horoballs are connected if and only if balls have bounded distortion (in the sense that the distortion functions are pointwise bounded). 

\begin{theorem}
Let $H$ be a nontrivial f.g.\ group, and $K$ any infinite f.g.\ group. Then for any symmetric generating sets $S, T \ni e_G$, the group $G = H \wr K$ admits a $T$-disconnected $S$-horoball.
\end{theorem}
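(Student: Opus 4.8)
The plan is to reduce the statement to a distortion estimate and then quote Lemma~\ref{lem:DisconnectedHoroball}: it suffices to exhibit a fixed $\ell$ and elements $a_n, b_n \in B_{S,n}(e_G)$ (after a left-translation, as in that lemma) with $d_T(a_n, b_n) \le \ell$ but with the intrinsic ball-distance $d_n(a_n, b_n) \to \infty$; the forward direction of the lemma's proof then manufactures a $T$-disconnected $S$-horoball from such a sequence. Thus the entire content is to show that the intrinsic metrics of the $S$-balls of $G = H \wr K$ are distorted by an unbounded amount, i.e.\ that $G$ fails to be almost convex. (One nonconstructive route is already visible: by Baumslag such $G$ is not finitely presented, and almost convexity with respect to any generating set would force finite presentation; we instead aim for explicit witnesses.)

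The first step is to record the ``travelling salesman'' description of the word metric. Writing $g = (\phi, k)$ with $\phi \colon K \to H$ finitely supported, $k \in K$ the cursor and $\pi\colon G \to K$ the canonical projection, $|g|_S$ is, up to constants depending only on $S$, the length of a shortest walk in the Cayley graph of $K$ that starts at $e_K$, meets every point of $\supp \phi$, and ends at $k$, together with a bounded charge per element of $\supp \phi$ for installing its $H$-value. Two finite quantities attached to $T$ will be used repeatedly: the maximal cursor displacement $D = \max_{t \in T} d(\pi(t), e_K)$ of a single generator and its reach $R$, the largest $K$-distance from the current cursor at which one application of a $t \in T$ can alter a lamp. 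The key locality consequence is that along any $T$-path the lamp at a site $x$ can change only while the cursor lies within $R$ of $x$, and the cursor moves by at most $D$ per step.

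For the witnesses I would use that $K$, being infinite and finitely generated, carries an infinite geodesic ray $(k_j)_{j \ge 0}$ with $k_0 = e_K$ (König's lemma), and fix some $h \neq e_H$ (this is the only place the nontriviality of $H$ is needed). The idea is to build a configuration that saturates the budget of a ball along this ray so that a bounded local modification near the cursor can only be realized, inside the ball, by first driving the cursor a distance $\Omega(n)$ back across a ``filled'' stretch of the ray and returning. Concretely one places lamps set to $h$ along a long initial segment of the ray and parks the cursor at the far tip, choosing the radius so that the configuration sits on the sphere; the reach and displacement bounds then force any in-ball reconciliation of the chosen pair to traverse the filled stretch, giving $d_n(a_n, b_n) = \Omega(n)$ while $d_T(a_n, b_n)$ stays bounded by a constant determined by $h$, $R$ and $D$.

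The main obstacle is uniformity in the generating sets: the naive ``toggle the lamp at the cursor'' modification is defeated by generating sets $T$ that contain powerful in-place or long-range moves, so the witness pair and the claim that every short in-ball route must make the long detour have to be argued from the tour-length and reach estimates rather than from any assumed shape of the generators. I expect this to require a packing/pigeonhole argument over the finitely many generators of $T$, exploiting that the cost of a tour covering a far cluster has a sharp threshold in the radius, to locate a bounded-size difference that straddles the threshold and whose in-ball realizations are all forced either outward past the saturated tip or inward across the filled segment. Once such a sequence $a_n, b_n$ is in hand, Lemma~\ref{lem:DisconnectedHoroball} finishes the proof; moreover the two resulting pieces of the horoball lie within bounded $T$-distance of one another, so the horoball produced this way is disconnected but still coarsely connected, consistent with the refinement recorded later for the lamplighter group.
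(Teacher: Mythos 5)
Your opening reduction is exactly the paper's first step: by Lemma~\ref{lem:DisconnectedHoroball} it suffices to find pairs $a_N, b_N$ inside $S$-balls at bounded $T$-distance but with intrinsic in-ball distance tending to infinity, and your locality constants (reach and displacement of $T$-generators; the paper uses $T \subset S^t$) are the right bookkeeping. Your parenthetical nonconstructive route (Baumslag non-finite-presentability plus Cannon's ``almost convex $\Rightarrow$ finitely presented'') is also sound --- the paper itself cites it as the reason the theorem is not new --- but it outsources all content to the literature, and you rightly aim for explicit witnesses. The gap is that the witness construction, which is the entire mathematical content, is not actually carried out, and the one you sketch is positioned wrongly. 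If both witnesses sit near the cursor at the tip of a filled segment, nothing forces a long detour: any pair differing by a single generator with both members in the ball is at in-ball distance $1$ by definition, and already in $\Z_2 \wr \Z$ with standard generators the natural near-tip pairs (toggle the lamp under the cursor; unlight it and retreat a step; etc.) admit short in-ball paths, since near the tip the norm just trades travel against lamp count and short reconciliations can be reordered to stay under the radius. This failure is structural, not an artifact of ``powerful'' generating sets $T$ as you suggest. Moreover, your $\Omega(n)$ lower bound rests on in-ball paths having to ``traverse the filled stretch,'' which tacitly uses that the ray separates the space --- true in $\Z$, false for general infinite $K$ (say $K = \Z^2$ or a free group), where the head can simply go around a filled segment. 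The deferred ``packing/pigeonhole argument'' is precisely the unproven core, not a technical afterthought.

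The paper's construction avoids both problems by using \emph{two} isolated lamps instead of a filled segment, and by exploiting the \emph{order of erasure}. Pick $k_1, k_2 \in K$ with $d_K(k_i, e_K) = N$ and $d_K(k_1, k_2) = 2N$, light a lamp $h$ at each, and put the head at the origin; this element $g$ has norm $4N \pm O(1)$. In the ball of radius $|g| - c'$ around $g$ (for suitable constant $c'$), budget saturation shows that \emph{any} element with both lamps erased must have its head within $N - t$ of $k_1$ or of $k_2$. Erasing the two lamps in the two possible orders produces two in-ball elements $g'_1, g'_2$ with \emph{identical} lamp states (both lamps off) and heads at bounded distance from each other near the origin --- but lying just inside the two \emph{different} regions. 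A short $T$-path between them cannot relight either lamp (the head would have to travel distance about $N$), so it keeps both lamps off; but then it cannot cross between the two metrically separated regions without passing through an element that the saturation bound excludes from the ball. Hence the in-ball distance is $\Omega(N)$ for a pair at $G$-distance $O(1)$, and the argument uses only that changing a lamp requires the head to be boundedly close to it --- no separation or barrier property of $K$ whatsoever. That order-of-erasure trick, manufacturing two elements that agree as configurations yet are trapped on different sides of the ball, is the idea your proposal is missing.
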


\begin{proof}
Let $t$ be such that $T \subset S^t$. Consider $K$ with the word metric $d_K$ arising from the symmetric generating set $S_K = \pi_K(S)$. Note that $G$ acts freely from the right on $H^K \times K$: $k \in K$ acts by $(x, k') \cdot k = (x, k'k)$, $h \in H$ acts by $(x, k') \cdot h = (y, k')$ where $y_{k'} = x_{k'} h$ and $\forall k \neq k': y_k = x_k$. We may identify $G$ with the pairs $(x, k)$ where $x_{k'} = e_H$ for $k'$ in a cofinite set, as $G$ clearly acts simply transitively on this set. We refer to the second component of $(x, k)$ as the \emph{head}, and $\supp(x,k) = \{k' \;|\; x_{k'} \neq e_H\}$ as the \emph{support}. For $h \in H$, define $\hat h = (x, e_K)$ where $x_{e_K} = h$, $x_k = e_H$ for $k \neq e_K$.

Pick some $h \in H$ once and for all, so $\hat h$ has word norm $O(1)$ in $G$. Pick a large $N$ and pick any $k_1, k_2 \in K$ such that $d_K(k_1, k_2) = 2N$ and $d_K(k_i, e_K) = N$. Pick any preimages $\pi(g_i) = k_i$ for $i = 1, 2$, with $d_S(e_G, g_i) = N$. This is possible since we chose $S_K = \pi(S)$ as the generating set of $K$. Observe that $\hat h^{g_1}$ and $\hat h^{g_2}$ commute if $N$ is large enough, as the support of $\hat h^{g_i}$ contains only elements at a bounded distance from $k_i$.

It is easy to see that $g = \hat h^{g_1} \circ \hat h^{g_2} \in G$ has word norm $4N \pm O(1)$: The upper bound comes from simply writing out the expression, and interpreting it as a path. The lower bound comes from the fact that even through the projection, the head has to travel to within a constant distance from $k_1$ and $k_2$, in some order.

Now let $c = \pm O(1)$ such that $|g| = 4N + c$. As in the previous paragraph, we see that picking $c' = O(1)$ sufficiently large, the ball of radius $4N + c - c'$ around $g$ contains no elements $(x, k)$ such that $k$ is at distance more than $N-t$ from both $k_1$ and $k_2$, and $x_{k_1} = x_{k_2} = e_H$ for both $i = 1, 2$.

On the other hand, by erasing the contents at $k_1$ and $k_2$ in two different orders, we find two elements $g'_1, g'_2 \in G$ in the $(4N + c - c')$-ball around $g$ where the heads are at bounded $S_K$-distance from each other, the head of $g'_i$ is at distance $N - t - O(1)$ from $k_i$, and the supports of the $g'_i$ are contained in an $S_K$-ball of radius $O(1)$. Clearly, their distance in $G$ is then bounded by a constant. By the previous paragraph, there is no $T$-path in $G$ between $g'_1$ and $g'_2$ where the values $x^1_{k_1} = x^2_{k_2} = e_H$ are not flipped back to $h$, since $d_K(k_1, k_2) = 2N$ and $T \subset S^k$ imply that an element of $T$ cannot move the head from the $(N-t)$-ball around $k_1$ into the $(N-t)$-ball around $k_2$. Thus, in any path within the ball the head must travel near either $k_1$ or $k_2$. This gives a lower bound on lengths of $T$-paths, which tends to infinity with $N$.

We obtain that the distortion function with respect to $T$ in such balls is unbounded in some fixed ball of radius $O(1)$. Any limit point is $T$-disconnected by the previous lemma. It is automatically a horoball because it does not contain $e_G$ and contains elements from a ball of radius $O(1)$.
\end{proof}

\subsection{The lamplighter group}

\begin{theorem}
With the generating set $\{e, a, b, a^{-1}, b^{-1}\}$ for $\Z_2 \wr \Z$, the set
\[ P = \{ g \in \Z_2 \wr \Z \;|\; \pi(g) < 0 \} \]
is not coarsely connected, and is a limit of Busemann horoballs.
\end{theorem}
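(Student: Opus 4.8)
The plan is to treat the two assertions separately, both resting on the description of $d_S$ as the length of a shortest ``lamplighter walk'': $|(x,k)|_S$ is the length of a shortest walk on $\Z$ that starts at $0$, ends at $k$, and traverses the edge $(p,p+1)$ for every $p \in \supp(x)$ (traversing this edge is exactly what is needed to toggle the lamp at $p$, since $b$ flips the lamp we leave stepping right and $b^{-1}$ flips the lamp we enter stepping left). First I would record the elementary consequence that a short step cannot disturb distant lamps: if $d_S(g,g') < t$ with $g=(x,n)$ and $g'=(x',n')$, then $g^{-1}g' = (\sigma^{-n}(x+x'), n'-n)$ has $S$-length $< t$, so the walk realizing it stays in $(-t,t)$, whence $\supp(x)\triangle\supp(x') \subseteq (n-t, n+t)$. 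In particular $x$ and $x'$ agree at every position $\ge n+t$.

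For the failure of coarse connectedness, fix any $t$ and consider $g_1 = (0,-1)$ and $g_2 = (1_{\{t\}}, -1)$ (the configuration with a single lit lamp at position $t$); both lie in $P$, since their head is $-1$. Suppose $g_1 = h_0, h_1, \dots, h_\ell = g_2$ is a sequence in $P$ with $d_S(h_i, h_{i+1}) < t$ for all $i$. Writing $h_i = (x^{(i)}, n_i)$ with $n_i \le -1$, the previous paragraph gives $n_i + t \le t-1 < t$, so the lamp at position $t$ is never altered along the sequence and $x^{(0)}_t = x^{(\ell)}_t$; this contradicts $g_1, g_2$ differing exactly at position $t$. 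As $t$ was arbitrary, no single $t$ witnesses coarse connectedness, so $P$ is not coarsely connected.

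For the second assertion I would exhibit $P$ as a Cantor limit of Busemann horoballs $\eta(p) = \bigcup_n B_{S,n}(p(n))$, using that for an infinite geodesic $p$ one has $\eta(p) = \{g : \beta_p(g) \le 0\}$ with $\beta_p(g) = \lim_n (d_S(g,p(n)) - n)$ (the limit exists since the sequence is non-increasing). Let $q_m$ be the path first stepping left to $-m$ lighting the lamps $-1, \dots, -m$, then stepping right forever, i.e.\ $(b^{-1})^m a^\infty$; a check with the walk formula shows every prefix is geodesic, so $q_m$ is an infinite geodesic with $q_m(t) = (1_{\llb -m,-1\rrb}, t-2m)$ for $t \ge m$. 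Fixing $(x,k)$ and taking $m$ large (so $\llb -m,-1\rrb$ lies to the left of $\supp(x)\cup\{k\}$ and $-m \notin \supp(x)$), the distance $d_S((x,k), q_m(t))$ for large $t$ is the length of a walk from $k$ to $t-2m$ reaching the left end $-m$ and the asymptotically rightmost endpoint $t-2m$, namely $(k+m) + (t-m) = t+k$; hence $\beta_{q_m}((x,k)) = k$. Thus for each fixed $(x,k)$ and all large $m$ we have $(x,k) \in \eta(q_m) \iff k \le 0$, so $\eta(q_m) \to \{g : \pi(g) \le 0\}$ in the Cantor topology.

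Finally, to pass from the closed half-space to $P$, I would translate: since $d_S$ is left-invariant, $t \mapsto a^{-1}q_m(t)$ is again an infinite geodesic and $\eta(a^{-1}q_m) = a^{-1}\eta(q_m)$, while left translation by $a^{-1}$ is a homeomorphism of $2^G$. As $a^{-1}(x,k) = (\sigma^{-1}x, k-1)$ has head $k-1$, we get $a^{-1}\{\pi \le 0\} = \{\pi \le -1\} = P$, whence $\eta(a^{-1}q_m) \to P$, exhibiting $P$ as a limit of Busemann horoballs; this also matches the lower-semicontinuity picture of the introduction, since the $q_m$ converge pointwise to the purely leftward $(b^{-1})^\infty$, whose horoball is strictly smaller than the limit. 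I expect the main obstacle to be this exact Busemann computation: one must handle the edge-covering subtlety of the walk formula carefully enough to pin the constant in $\beta_{q_m} = k$ precisely and to verify geodesicity of every prefix, and the off-by-one between $\{\pi \le 0\}$ and $P = \{\pi < 0\}$ is exactly what the final translation by $a^{-1}$ repairs. The non-coarse-connectedness argument is comparatively routine once the ``distant lamps are frozen'' lemma is in place.
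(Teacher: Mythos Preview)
Your proof is correct. Both halves use the same underlying ideas as the paper, but your packaging of the second half is different and arguably cleaner.

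For the non-coarse-connectedness, your argument is essentially identical to the paper's: both observe that without moving the head to the right of the origin one cannot alter lamps at large positive positions. Your ``distant lamps are frozen'' lemma makes this precise in exactly the right way.

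For exhibiting $P$ as a limit of Busemann horoballs, you and the paper use the same family of geodesics (go left $m$ steps, then right forever) but organize the argument differently. The paper first shows that origin-grazing balls $B_{3N-1}(g_N)$ around suitable elements $g_N$ converge to $P$, and then argues that these balls can be replaced by the Busemann horoballs of extended geodesics without changing the limit; it also invokes a full description of the geodesic subshift. You instead compute the Busemann function $\beta_{q_m}$ directly, obtain the closed half-space $\{\pi\le 0\}$ as the limit, and then correct the off-by-one with a left translation by $a^{-1}$. Your route avoids the intermediate ``balls first, then upgrade'' step and the sofic description of all geodesics, at the cost of the extra translation; the paper's route has the side benefit of making it easy to check that $P$ is itself not Busemann, though this already follows from your disconnectedness argument.
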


\begin{proof}
For the following discussion, think of the lamplighter group as the group acting faithfully on $X_0 \times \Z$ where $X_0 = \{x \in \{0,1\}^{\Z+\frac12} \;|\; \sum x < \infty \}$, generated by $a(x,n) = (x,n+1)$ and $b(x, n) = (y, n+1)$ where $y_{n+1/2} = 1-x_{n+1/2}$, $y_i = x_i$ for $i \neq n+1/2$. The support of $(x,n)$ is of course $\{i \in \Z + \frac12 \;|\; x_i \neq 0\}$. We imagine $\Z+1/2$ as a horizontal number line ($-\infty+1/2$ is on the left), on which the head walks, flipping lamps on its way.

The fact $P$ is not coarsely connected is obvious: without moving the head to the right of $-1 \in \Z$, we cannot modify bits of the support at $n+1/2$ for large $n$.

The $\N$-parametrized geodesics of a group, from the origin, written over a generating set, form a subshift, which we call the \emph{geodesic ($\N$-)subshift}. It is easy to see that the geodesic subshift of the lamplighter group with respect to the above generators is the sofic shift which is the closure of the $\omega$-regular language
\[ (a + b)^* (ab^{-1} + ba^{-1}) (a^{-1} + b^{-1})^\omega + (a^{-1} + b^{-1})^* (b^{-1}a + a^{-1}b) (a + b)^\omega. \]
In other words, the head makes a finite (possibly empty) sweep on the left, flips a lamp, and then keeps traveling to the right, or vice versa.

Consider an origin-grazing ball around an element $g$, where $\pi(g) = N > 0$ is large, and the support is contained in $\llb -N+1/2, N-1/2 \rrb$ with leftmost element $-N+1/2$. Clearly the optimal way to reach the origin of the lamplighter group is to move over the entire support, and come back to the origin, so the word norm of $g$ is exactly $3N$. Clearly in every element of the $(3N-1)$-ball, either the head is strictly to the left of the origin or the $(-N+1/2)$th bit of the support is $1$. On the other hand, every lamplighter group element $(x, n)$ with $-N \leq n < 0$ and $\supp(x, n) \subset [-N, N]$ is in the ball.

It follows that as $N \rightarrow \infty$, these balls give the horoball $P$: If $n < 0$, then $(x,n)$ is in the ball once $N \geq -n$ and $\supp(x,n) \subset [-N, N]$, while if $n \geq 0$ then $(x,n)$ is not in the ball once $N+1/2 < \min \supp(x, n)$, since every element $(y, n)$ in the ball has $y_{N+1/2} = 1$.

By the description of the geodesic subshift of the lamplighter group, the inverses of the geodesics we described from the elements $g$ to the origin can be continued to infinite geodesics, and it is easy to see that this extension does not change the corresponding Busemann horoball's intersection with the $N$-ball. Thus, the limit of these Busemann horoballs is the same as the above limit of balls, namely $P$.

On the other hand, if $P$ were a Busemann horoball, up to symmetry we may assume that it is given by a geodesic in the closure of $(a^{-1} + b^{-1})^n a^{-1}b (a + b)^\omega$. Clearly no element $(x, -1)$ with $x_{n-1/2} = 1$ is in the corresponding Busemann horoball, thus it cannot be $P$.
\end{proof}

If one picks a converging subsequence of the geodesics to the elements $g$, one obtains a geodesic in $\{a^{-1}, b^{-1}\}^\N$. The corresponding Busemann horoball is contained in
\[ \{ h \in \Z_2 \wr \Z \;|\; \pi(h) < 0, \supp(h) \subset (-\infty, -1/2 \rrb \}, \]
thus is not equal to $P$.

\section{Horoballs for the generalized Heisenberg groups}

\subsection{Projection-symmetric horoballs}

We show that there exist horoballs for the generalized Heisenberg group which are symmetric under the projection flip, and that no such set can be a Busemann horoball. We need a lemma for discrete geodesics in $\Z^d$: after finitely many steps, any geodesic travels at maximal speed.

\begin{lemma}
\label{lem:ZdGeodesics}
Let $0^d \in S \subset \Z^d$ be any finite symmetric generating set. Then there exists $k$ such that for every S-geodesic $p : \N \to \Z^d$ with $p(0) = 0^d$, there exists a nonzero linear map $\ell : \R^d \to \R$ such that
\[ \forall t: \ell(p(t)) \geq \max_{s \in S}(t\ell(s)) - k. \]
\end{lemma}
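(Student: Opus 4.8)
The plan is to extract the linear map $\ell$ from the facet structure of the polytope $\Conv{S}$ and to track how far the geodesic stays from the associated supporting hyperplane. First I would record the basic convex geometry: since $S$ is symmetric and generates $\Z^d$, the set $\Conv{S}$ is a full-dimensional polytope symmetric about $0^d$, so $0^d$ lies in its interior. Let $\ell_1, \dots, \ell_m$ be the outer facet normals of $\Conv{S}$ and put $M_i = \max_{s \in S} \ell_i(s) > 0$, so that $n\Conv{S} = \{ w \;|\; \ell_i(w) \leq n M_i \text{ for all } i \}$ for every $n \geq 0$. Since $p(t)$ is a sum of $t$ elements of $S \subset \Conv{S}$ and $\Conv{S}$ is convex, we have $p(t) \in t\Conv{S}$, and hence the \emph{deficiencies} $\delta_i(t) = t M_i - \ell_i(p(t))$ are nonnegative. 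The statement to be proved is exactly that some single $i$ satisfies $\delta_i(t) \leq k$ for all $t$.

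The key structural observation is that each $\delta_i$ is non-decreasing in $t$: writing $p(t+1) = p(t) + s_{t+1}$ with $s_{t+1} \in S$, the increment is $\delta_i(t+1) - \delta_i(t) = M_i - \ell_i(s_{t+1}) \geq 0$ by definition of $M_i$. Next I would use Lemma~\ref{lem:SF} to produce a constant $C_0$, depending only on $S$ and $d$, with the following property: if at some (large) time $t$ we had $\delta_i(t) \geq C_0$ for \emph{every} facet $i$, then $p(t)$ would admit a representation as a sum of strictly fewer than $t$ generators, contradicting the geodesic condition $|p(t)|_S = t$. Indeed, $\delta_i(t) \geq C_0$ for all $i$ forces $p(t) \in (t - s)\Conv{S}$ for $s = \lfloor C_0 / \max_i M_i \rfloor$ (compare the defining inequalities of $(t-s)\Conv{S}$), and then Shapley--Folkman gives $p(t) \in S^{t - s - d} + \Conv{S}^{} $, more precisely $p(t) = u + w$ with $u \in S^{t-s-d}$ and $w \in d\Conv{S} \cap \Z^d$. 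Thus $|p(t)|_S \leq t - s - d + D$, where $D = \max \{ |w|_S \;|\; w \in d\Conv{S} \cap \Z^d \}$ is finite because $d\Conv{S}$ is bounded. Choosing $C_0$ large enough that $s > D - d$ makes this bound strictly less than $t$, the desired contradiction.

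Consequently, for all sufficiently large $t$ there is at least one facet $i$ with $\delta_i(t) < C_0$. Since there are only finitely many facets, by pigeonhole some fixed $i^*$ satisfies $\delta_{i^*}(t) < C_0$ for arbitrarily large $t$; and because $\delta_{i^*}$ is non-decreasing, this forces $\delta_{i^*}(t) < C_0$ for \emph{all} $t$. Taking $\ell = \ell_{i^*}$ (nonzero, being a facet normal) and $k = C_0$ then yields $\ell(p(t)) = t M_{i^*} - \delta_{i^*}(t) \geq t \max_{s \in S} \ell(s) - k$ for all $t$, with $k$ depending only on $S$. The step I expect to be the main obstacle is obtaining a \emph{single} $\ell$ that works uniformly in $t$: the Shapley--Folkman estimate only yields some good facet at each individual time, and it is the monotonicity of the deficiencies, combined with finiteness of the facet set, that upgrades this to one globally good facet. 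The other delicate point is making the Shapley--Folkman slack large enough to overcome the finite rounding constant $D$ coming from integer points of $d\Conv{S}$, which is precisely what pins down the value of $C_0$.
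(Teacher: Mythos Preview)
Your argument is correct and is in fact a bit cleaner than the paper's. Both proofs hinge on the same Shapley--Folkman shortcut: a point too far inside $t\Conv{S}$ can be reached in fewer than $t$ steps. The difference is in how the linear functional is produced. The paper argues qualitatively: it looks at the set $V\subset S$ of generators used infinitely often along $p$, shows that $\Conv{V}$ cannot contain $0$ or any interior point of $\Conv{S}$ (else the shortcutting applies), so $\Conv V$ sits in a proper face and a supporting functional for that face serves as $\ell$; uniformity of $k$ is then sketched separately by replacing ``infinitely often'' with ``sufficiently many times''. You instead fix the finite list of facet normals up front, track the nonnegative, non-decreasing deficiencies $\delta_i(t)=tM_i-\ell_i(p(t))$, and observe that Shapley--Folkman forces at least one $\delta_i(t)$ to stay below a constant depending only on $S$; monotonicity plus pigeonhole then pins down a single facet working for all $t$. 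The payoff of your route is that the constant $C_0$ is manifestly independent of $p$ from the outset, so uniformity falls out for free rather than needing a second pass. The paper's route, on the other hand, makes the geometric content (which generators can recur on a geodesic) more explicit, which is arguably what gets used downstream in the Heisenberg application.
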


\begin{proof}
We first prove that $k$ exists for each $p$ separately. Let $p = \hat x$ and consider the set of vectors $V \subset S$ used infinitely many times in $x$.

Observe that the convex hull of $V$ does not contain $0^d$: otherwise by linear algebra there is a positive $\Z$-linear combination of vectors in $V$ summing to $0^d$, so $x$ is not a geodesic because we can remove any corresponding subsequence.

We now claim that the convex hull of $V$ cannot contain any vector $v$ which is strictly contained in the convex hull of $S$. Otherwise, it also contains a rational convex combination $v = \sum_{j = 1}^d a_j v_j$. Let $k$ be such that $ka_j \in \Z$ so that $k v \in \Z^d$. Since $\sum_{j = 1}^d a_j = 1$, $\sum_{j = 1}^d ka_j = k$ and we have $kv \in V^k$.

By the assumption that $v$ is strictly contained in $\Conv{S}$, for some rational $\lambda > 1$ we have $\lambda v \in \Conv{S}$, thus $\lambda i k v \in \Conv{S}^{ik}$ for all $i \geq 1$, where $A^n$ for $A \subset \R^d$ denotes the iterated Minkowski sum, so by Lemma~\ref{lem:SF} we have
\[ \lambda ikv \in S^{ik-d} + u \]
for some $u \in \Conv{S}^d$.

We have $ikv \in V^{ik}$ for all $i$. If $m$ and $i = m/\lambda$ are integers then $\lambda ikv \in \Z^d$ and by the previous paragraph $\lambda ikv \in S^{ik-d} + u$, necessarily for some $u \in \Z^d$, with word norm $O(1)$ over the generating set $S$. Thus $\lambda ikv = km v \in V^{mk}$ has word norm at most $ik - d + O(1) = km / \lambda + O(1)$ over $S$. For large $m$, this clearly contradicts the assumption that $x$ is geodesic, as we can replace a subsequence corresponding to $kmv \in V^{mk}$ in $x$ by an element of $S^{km / \lambda + O(1)}$.

The fact $k$ is uniform is obtained from the same argument, considering instead the set of vectors that appear sufficiently many times so that the above shortcutting arguments apply.
\end{proof}

The \emph{natural projection} is $\phi : H_{2n+1} \to \Z^{2n}$ defined by $\phi(a,b,c) = (a,b)$. The \emph{height} of $(a,b,c)$ is $c$.

\begin{lemma}
Fix any finite generating set $S$ for $H_{2n+1}$. Let $x \in S^\omega$ be a geodesic starting at the origin written in terms of generators. Then $x = w \cdot y$ where the natural projection of $\hat y$ is a geodesic in $\Z^{2n}$ under the projections of the generators, and $|w|$ is bounded.
\end{lemma}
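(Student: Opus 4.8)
The plan is to combine Lemma~\ref{lem:ZdGeodesics} with the observation that the central coordinate $c$ plays no role in determining whether the projection $\phi(\hat x)$ is efficient. Let $x \in S^\omega$ be a geodesic starting at the origin, and consider its natural projection: writing $\bar S = \phi(S) \subset \Z^{2n}$, the sequence of projected generators traces a path $\phi(\hat x)$ in $\Z^{2n}$. The key point is that if $x$ is a geodesic in $H_{2n+1}$, then its projection cannot be wasteful in $\Z^{2n}$ beyond a bounded amount: any cancellation in the projected path corresponds to a potential shortcut in $H_{2n+1}$, up to the error incurred in the central coordinate.

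First I would apply Lemma~\ref{lem:ZdGeodesics} to the set $\bar S = \phi(S)$, which is a finite symmetric generating set of $\Z^{2n}$ (symmetric because $S$ is, and containing $0^{2n}$ since $e_H \in S$). This gives a uniform constant $k$ so that every $\bar S$-geodesic in $\Z^{2n}$ eventually travels at maximal speed in some linear direction. The heart of the argument is then to show that after deleting a bounded-length prefix $w$, the remaining projection $\phi(\hat y)$ is an honest geodesic in $\Z^{2n}$. I would argue by contradiction: suppose the projected path makes infinitely many ``detours,'' i.e.\ there are infinitely many pairs of positions where the projected path could be shortened. Each such shortening in $\Z^{2n}$ lifts to a rerouting in $H_{2n+1}$ that saves at least one projected step; the only obstruction is that rerouting in the plane changes the accumulated central term by the signed area swept, but since the generators are fixed and the saved segment has bounded projected displacement, one can absorb or correct the central discrepancy at cost $O(1)$ using the commutator structure (central elements are reachable efficiently, as $\hat h^{g_1}$-type products show that a fixed central displacement costs $O(1)$ relative to a long path). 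Thus infinitely many independent shortcuts would contradict geodesicity of $x$.

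The main obstacle I anticipate is controlling the central coordinate during the shortcutting: a shortcut in the projection $\Z^{2n}$ generically alters $c$ by a nonzero amount equal to twice the enclosed signed area, so one cannot naively conclude the reroute has the same endpoint in $H_{2n+1}$. The resolution should be that the word metric distortion needed to fix a bounded central error is itself bounded, so a reroute that saves a projected step while incurring only an $O(1)$ central correction is still a net shortening once the saving is large enough — and by collecting sufficiently many detours (as in the ``sufficiently many times'' uniformity argument in Lemma~\ref{lem:ZdGeodesics}) the cumulative projected saving outpaces any fixed central correction cost. I would make precise that only a bounded prefix $w$ can fail the geodesic-projection property, since beyond a bounded point the projection must already travel at maximal speed by Lemma~\ref{lem:ZdGeodesics}, and any later inefficiency would give an arbitrarily large projected saving against a bounded central penalty.

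Concretely, the bound on $|w|$ comes from the uniform $k$ of Lemma~\ref{lem:ZdGeodesics} together with the uniform $O(1)$ cost of central corrections: once the projected path has been traversed past the transient governed by $k$, it lies in a maximal-speed regime and can contain no further removable segment without contradicting the global geodesicity of $x$ in $H_{2n+1}$. Hence $\phi(\hat y)$ is a genuine $\bar S$-geodesic in $\Z^{2n}$ with $|w| = O(1)$, as claimed.
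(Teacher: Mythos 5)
Your proposal has a genuine gap at its central step: the claim that a shortcut in the projection incurs only an $O(1)$ central discrepancy. You justify this by saying ``the saved segment has bounded projected displacement,'' but nothing bounds the segments along which the projection can be shortened. If the conclusion of the lemma fails, the shortcuttable portions of $\phi(\hat x)$ may be arbitrarily long --- inefficiency in $\Z^{2n}$ is not locally detectable at any bounded scale (for instance, with a generating set whose projection is a polygon with many vertices, a path spending $M$ steps on each extreme direction in cyclic order is geodesic on every window of length much smaller than $M$, yet globally far from geodesic). Rerouting a segment of length $L$ changes the height by twice an enclosed area, which can be of order $L^2$, and correcting that by inserting a central word costs on the order of $\sqrt{L^2} = L$, which can swallow the entire saving. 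So your statement that ``a fixed central displacement costs $O(1)$'' is true but beside the point: the displacement to be fixed is not fixed. This is exactly the difficulty the paper's proof is built around. It first shows that the projection of a geodesic must travel arbitrarily far (if the projection stays bounded, the height grows at most linearly, while heights of size $\Omega(t^2)$ are reachable in $t$ steps, contradicting geodesicity --- a case your proposal never addresses), and then uses the conjugation identities $(-v_i,0,0)(a,b,c)(v_i,0,0)=(a,b,c-2v_i\cdot b)$ to repair an \emph{arbitrarily large} height error at cost $O(1)$: wrapping two generators around a subpath whose projected offset $b$ is huge changes the height by $2v_i\cdot b$, and since some coordinate of the projection takes every value in a syndetic set, any required correction is within $O(1)$ of an achievable one. (Your ``$\hat h^{g_1}$-type products'' are notation from the wreath-product section and play no role here.)

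A second structural problem is that your appeal to Lemma~\ref{lem:ZdGeodesics} is circular. That lemma describes paths already known to be geodesics in $\Z^{2n}$; whether (a suffix of) $\phi(\hat x)$ is such a geodesic is precisely what is to be proved, so the ``maximal-speed regime past the transient governed by $k$'' cannot be invoked to locate the prefix $w$ or bound its length. The paper's proof of this lemma does not use Lemma~\ref{lem:ZdGeodesics} at all; that lemma enters only afterwards, applied to the projection once the present lemma guarantees it is eventually a $\Z^{2n}$-geodesic. The uniform bound on $|w|$ comes instead from a quantitative version of the shortcutting-plus-conjugation argument, not from the constant $k$ of Lemma~\ref{lem:ZdGeodesics}.
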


\begin{proof}
First let us prove this without a uniform bound on $|w|$. Suppose $x$ is a geodesic. If the natural projection of $\hat x$ does not travel arbitrarily far, then the height also grows at a linear rate. Then it is easy to see that $\hat x$ is not a geodesic, because we can raise the height by a quadratic amount in linearly many steps.

Suppose then that the projection of $\hat x$ does travel arbitrarily far, and the conclusion of the lemma is false. Looking only at the natural projection $\pi(\hat x)$, the assumption is precisely that we can take arbitrarily good shortcuts in the natural projection. When we do so, the actual path in the generalized Heisenberg group may have incorrect height after the shortcut. We show that that we can fix it any height problems by an $O(1)$-modification to the path, contradicting the assumption that $\hat x$ was a geodesic.

Since $\hat x$ travels arbitrarily far in the projection, along at least one coordinate among the $2n$ many, it travels arbitrarily far. It follows that there are subpaths whose offset corresponds to an element $(a,b,c)$ where at least one coordinate of $a$ or $b$ is unbounded (thus takes every value in a syndetic subset of $\N$ or $-\N$, the maximal possible gap size depending on $S$). The formulas
\[ (-v_i,0,0)(a,b,c)(v_i,0,0) %= (-v_i,0,0)*(a+v_i,b,c-v_i \cdot b)
= (a,b,c - 2 v_i \cdot b), \]
\[ (0,-v_i,0)(a,b,c)(0,v_i,0) %= (0,-v_i,0)*(a,b+v_i,c + a_i\cdot v_i)
= (a,b,c + 2 a \cdot v_i), \]
where $v_i$ is the $i$th standard generator of $\Z^n$ or its negation, now imply that we may raise or lower the projection by any amount from a syndetic subset of $\Z$, with only an $O(1)$ increase in word norm. The finite height offset that is left can be covered in $O(1)$ additional steps. This contradicts geodesicity as explained in the previous paragraph.

The fact there is a uniform bound on $|w|$ follows easily from a quantitative version of this argument.
\end{proof}

It should be possible to refine the above result into an exact description of the infinite geodesics, at least for the generating set used in the following sections. This subshift is not sofic, but we conjecture the finite prefixes form a one-counter language. The language of finite geodesics has been described rather completely in \cite{Sh89} for this generating set, and of course the infinite geodesics are just the limits of finite ones in the obvious sense.

We need a simple general lemma about Busemann horoballs. Say a set $P \subset G$ \emph{grazes} $g \in G$ if $g \notin P$ and $gS \cap P \neq \emptyset$. 

\begin{lemma}
\label{lem:Grazing}
If a Busemann horoball $P$ grazes $g$, then there is a geodesic that begins in the $S$-neighborhood of $g$ whose corresponding Busemann horoball is $P$.
\end{lemma}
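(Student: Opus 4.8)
The plan is to reformulate everything through the Busemann function of the defining geodesic and then to re-center that geodesic at the grazing point by a simple concatenation, rather than by a limiting argument. Since $P$ is a Busemann horoball, fix an infinite geodesic $p$ with $P = \bigcup_n B_{S,n}(p(n))$, and for $x \in G$ set $\beta_p(x) = \lim_n (d(x,p(n)) - n)$, where $d$ is the word metric. First I would record that the sequence $d(x,p(n)) - n$ is integer-valued, non-increasing in $n$ (by the triangle inequality together with $d(p(n),p(n+1)) = 1$), and bounded below by $-d(x,p(0))$; hence it is eventually constant, so $\beta_p$ is well-defined, integer-valued, and $1$-Lipschitz. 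The key identification is $P = \{x : \beta_p(x) \le 0\}$: indeed $x \in B_{S,n}(p(n))$ for some $n$ iff $d(x,p(n)) - n \le 0$ for some $n$, which by monotonicity is equivalent to $\beta_p(x) \le 0$.

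Next I would extract the grazing data. Grazing gives $g \notin P$ and some $s \in S$ with $gs \in P$, so $\beta_p(g) \ge 1$ and $\beta_p(gs) \le 0$; since $\beta_p$ is $1$-Lipschitz and $d(g,gs) \le 1$, these force $\beta_p(gs) = 0$ (in fact $\beta_p(g) = 1$). The decisive consequence of $\beta_p(gs) = 0$ is that the non-increasing integer sequence $d(gs,p(m)) - m$ stabilizes at $0$, i.e.\ there is $m^*$ with $d(gs,p(m)) = m$ for all $m \ge m^*$. I would then build the candidate geodesic by prepending: fix any $n_1 \ge m^*$, choose a geodesic $\tau : \llb 0, n_1 \rrb \to G$ from $gs$ to $p(n_1)$ (which has length exactly $d(gs,p(n_1)) = n_1$), and define $q$ to agree with $\tau$ on $\llb 0, n_1 \rrb$ and with $p$ afterwards, i.e.\ $q(n_1 + j) = p(n_1 + j)$ for $j \ge 0$.

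Finally I would verify the two required properties. That $q$ is an infinite geodesic is the crux: its initial block is geodesic because $\tau$ is, and for the tail I use $d(gs,p(n_1 + j)) = n_1 + j = |\tau| + j$ (again by stabilization), which says exactly that the splice does not backtrack, so $d(q(0),q(n_1 + j)) = n_1 + j$ for all $j$. For the horoball, since $q(n) = p(n)$ for all $n \ge n_1$ we get $\bigcup_{n \ge n_1} B_{S,n}(q(n)) = \bigcup_{n \ge n_1} B_{S,n}(p(n)) = P$ (the union being increasing in $n$ for any geodesic, as $d(q(n),q(n+1)) = 1$); and the finitely many prefix balls $B_{S,n}(q(n))$ with $n < n_1$ are contained in $B_{S,n_1}(q(n_1)) = B_{S,n_1}(p(n_1)) \subseteq P$, so they contribute nothing new. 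Hence $\bigcup_n B_{S,n}(q(n)) = P$, while $q(0) = gs \in B_{S,1}(g) = gS$ lies in the $S$-neighborhood of $g$, as required. The main obstacle here is conceptual rather than computational: one must resist building a fresh ``gradient ray'' emanating from $gs$, which need not reproduce $P$ (word metrics are neither hyperbolic nor $\mathrm{CAT}(0)$, so asymptotic geodesics need not define the same horoball), and instead splice onto a genuine tail of $p$; the technical heart is precisely the stabilization of $d(gs,p(m)) - m$, which is what makes the splice geodesic and thereby preserves the horoball.
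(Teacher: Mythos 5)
Your proof is correct and follows essentially the same route as the paper: both splice a geodesic from $gs$ to a sufficiently far point $p(n)$ onto the tail of $p$, check the concatenation is an infinite geodesic, and note the union of balls is unchanged since the two rays eventually coincide. Your Busemann-function bookkeeping is a repackaging of the paper's key step --- the paper gets $d(gs,p(j)) \geq j$ for all $j$ directly from $g \notin P$ (i.e.\ $d(g,p(j)) > j$) and the triangle inequality, which is exactly the content of your stabilization claim $\beta_p(gs)=0$.
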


\begin{proof}
Suppose $P = \bigcup_n B_{S,n}(p(n))$ for $p : \N \to G$ an infinite geodesic. Since $P$ grazes $g$, for some $n$ we have $B_{S,n}(p(n)) \cap gS \ni gs$ for some $s \in S$. Let $w : \llb 0, n \rrb \to G$ be a geodesic with $w(0) \in gs$ and $w(n) = p(n)$ and define a path by $q : \N \to G$ by $q(i) = \left\{\begin{array}{ll}
w(i) & \mbox{if } i \leq n \\
p(i) & \mbox{otherwise.}
\end{array}\right.$

Then $q$ must be a geodesic: Otherwise $d(q(0), q(j)) < j$. Since $q$ is a concatenation of two geodesics, we must have $j > n$ and thus $d(q(0), q(j)) = d(gs, p(j)) < j$. It follows that $d(g, p(j)) \leq j$ so $g \in B_{S,j}(p(j)) \subset P$, contradicting the assumption that $P$ grazes $g$.

The Busemann horoball corresponding to $q$ must be the same as that for $p$ since $q(i) = p(i)$ for large enough $i$, and the union defining a Busemann horoball is increasing.
\end{proof}

\begin{lemma}
For any symmetric projection-symmetric generating set $S \ni e_{H_{2n+1}}$ for $H_{2n+1}$, the natural projection of any limit of Busemann horoballs grazing the identity is contained in an affine half-space with bounded stretch.
\end{lemma}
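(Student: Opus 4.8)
The plan is to bound the projection of each Busemann horoball in the sequence by a half-space of uniformly controlled stretch, and then pass to the limit by compactness. The key fact I would exploit throughout is that $\phi : H_{2n+1}\to\Z^{2n}$ is a surjective homomorphism carrying $S$ into the finite generating set $\phi(S)\ni 0^{2n}$, so it is $1$-Lipschitz, i.e.\ $\phi(B_{S,n}(g))\subseteq B_{\phi(S),n}(\phi(g))$; moreover $\phi(S)=-\phi(S)$ because $s^{-1}=(-a,-b,-c)$ for $s=(a,b,c)$, so $\phi(s^{-1})=-\phi(s)$ and $S$ is symmetric. First I would normalize the position of the defining geodesics. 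Let $P_i$ be Busemann horoballs grazing $e$ with $P_i\to P$. By Lemma~\ref{lem:Grazing} each $P_i$ is the Busemann horoball of a geodesic $q_i$ with $q_i(0)$ at distance $\le 1$ from $e$; left-translating by $q_i(0)^{-1}$ yields a geodesic $p_i$ from the origin with $P_i=q_i(0)\cdot P_i'$, where $P_i'$ is the Busemann horoball of $p_i$. Applying $\phi$ gives $\phi(P_i)=\phi(q_i(0))+\phi(P_i')$, a translate of $\phi(P_i')$ by a vector of norm $\le D:=\max_{s\in S}|\phi(s)|$, so it suffices to put $\phi(P_i')$ in a half-space of uniformly bounded stretch.

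Next I would push the projection through and invoke the structural lemmas. Writing $r_i=\phi\circ p_i$, the Lipschitz bound gives $\phi(P_i')\subseteq\bigcup_{t}B_{\phi(S),t}(r_i(t))$. By the preceding lemma giving the decomposition $x=w\cdot y$, we have $p_i=\hat{x_i}$ with $x_i=w_i\,y_i$, $|w_i|\le L$ for a uniform $L$, and $\gamma_i:=\phi\circ\hat{y_i}$ a $\phi(S)$-geodesic from the origin; hence $r_i(t)=\phi(W_i)+\gamma_i(t-|w_i|)$ for $t\ge|w_i|$, where $W_i$ is the group element of $w_i$ and $|\phi(W_i)|\le LD$. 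By Lemma~\ref{lem:ZdGeodesics}, whose $k$ is uniform over geodesics and where I normalize the functional $\ell$ to a unit vector $u_i$, there is a unit $u_i$ with $u_i\cdot\gamma_i(s)\ge ms-k$ for all $s$, where $m=\max_{s\in S}u_i\cdot\phi(s)$, and $0<m\le D$ since $\phi(S)$ spans $\R^{2n}$.

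The crux is then a single speed comparison. Because $\phi(S)=-\phi(S)$, the maximal rate of change of $u_i$ along a generator equals $m$, that is $\max_{s\in S}|u_i\cdot\phi(s)|=m$. Hence for any $v$ with $d(v,r_i(t))\le t$ in the word metric of $\Z^{2n}$, following a length-$t$ path from $r_i(t)$ to $v$ can decrease $u_i$ by at most $mt$, so
\[ u_i\cdot v \;\ge\; u_i\cdot r_i(t) - mt \;\ge\; \big(u_i\cdot\phi(W_i) + m(t-|w_i|) - k\big) - mt \;\ge\; -\big(LD + mL + k\big), \]
a bound independent of $t$ and $v$, and, since $m\le D$, uniform in $i$. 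Treating the finitely many terms with $t<|w_i|$ separately (their balls lie within $O(1)$ of the origin), I obtain $\phi(P_i')\subseteq\{v:u_i\cdot v\ge -C\}$ for a uniform constant $C$, and therefore, after the bounded translation, $\phi(P_i)\subseteq\{v:u_i\cdot v\ge -C'\}$ for a uniform $C'$.

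Finally I would pass to the limit. Choosing a subsequence along which $u_i\to u$ (a unit vector) and the offsets converge, any $v\in\phi(P)$ is $\phi(g)$ for some $g\in P$, whence $g\in P_i$ and $u_i\cdot v\ge -C'$ for all large $i$; letting $i\to\infty$ gives $u\cdot v\ge -C'$. Thus $\phi(P)\subseteq\{v:u\cdot v\ge -C'\}$, a half-space of stretch $\le C'$. The only genuinely delicate point is the speed comparison together with the bookkeeping needed to keep $C$ uniform in $i$; this rests on the uniformity of $k$ in Lemma~\ref{lem:ZdGeodesics}, the uniform bound $|w_i|\le L$, and crucially the symmetry of $S$ forcing $\max_{s}|u_i\cdot\phi(s)|=m$.
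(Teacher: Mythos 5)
Your proposal is correct and follows essentially the same route as the paper's proof: normalize via Lemma~\ref{lem:Grazing}, apply the $x = w\cdot y$ decomposition lemma and Lemma~\ref{lem:ZdGeodesics}, use the symmetry of $S$ (so that the projected generators satisfy $\phi(S) = -\phi(S)$) to compare the speed of the geodesic against the maximal rate of descent of the functional, and pass to the limit using the uniformity of the constants. The paper compresses your explicit half-space computation into ``this formula clearly implies'' and your compactness argument (extracting $u_i \to u$) into ``clearly preserved under taking limits,'' so your write-up simply supplies the details the paper leaves implicit.
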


\begin{proof}
If a Busemann horoball grazes the identity, the corresponding geodesic $p$ may be assumed to begin in the neighborhood of the identity by Lemma~\ref{lem:Grazing}. By the previous lemma, the natural projection $p$ of any geodesic starting at the origin, after a bounded prefix, becomes a geodesic in $\Z^{2n}$; clearly the same is true if $p$ begins in the neighborhood of the origin. Lemma~\ref{lem:ZdGeodesics} implies that
$\ell(p(t)) \geq \max_{s \in S}(t\ell(\pi(s))) - k$
for some nonzero linear map $\ell$ (we need to increase $k$ by $O(1)$ to account for the bounded shift in the starting point of the geodesic). Since $\pi(B_{S,r}(g)) = B_{\pi(S),r}(\pi(g))$ for all $r \in \N, g \in H_{2n+1}$, this formula clearly implies that the projection is contained in an affine half-space of bounded stretch, namely the half-space $\{v \in \R^d \;|\; \ell(v) \geq -k\}$. This property is clearly preserved under taking limits, since there is a uniform bound on $k$.
\end{proof}

We can now easily prove Theorem~\ref{thm:Heisenberg}.

\begin{theorem}
\label{thm:HeisenbergProof}
Let $S$ be any symmetric projection-symmetric generating set for the generalized Heisenberg group. Then there is an $S$-horoball $P$ which is not a limit of Busemann horoballs.
\end{theorem}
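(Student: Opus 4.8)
The plan is to produce a single horoball $P$ that is invariant under the projection flip $(a,b,c)\mapsto(-a,-b,c)$ and grazes the identity, and then to play its symmetry against the preceding lemma, which forces the natural projection of any flip-invariant limit of Busemann horoballs to sit inside a bounded-width slab of $\Z^{2n}$.

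First I would construct $P$. Since the projection flip is an automorphism of $H_{2n+1}$ that fixes the central axis $\{0\}^{2n}\times\Z$ pointwise and preserves $S$ (by projection-symmetry), every ball $B_{S,n}((0^{2n},c))$ centered on the central axis is flip-invariant. So I take centers $g_i=(0^{2n},c_i)$ with $c_i\to-\infty$ and radii $n_i=|g_i|_S$ (or $|g_i|_S-1$, to force $e\notin P$), choosing the $c_i$ at heights where the central word norm jumps, and pass to a convergent subsequence $B_{S,n_i}(g_i)\to P$ in the Cantor topology. A routine compactness argument in the style of Lemma~\ref{lem:DisconnectedHoroball} then lets me arrange that $e\notin P$ while some fixed generator lies in $P$, so that $P$ is a genuine horoball grazing the identity; flip-invariance survives the limit since each ball is flip-invariant.

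Next comes the reduction. Suppose toward a contradiction that $P$ is a limit of Busemann horoballs. As $P$ grazes the identity, the preceding lemma supplies a nonzero functional $\ell$ and a uniform bound $k$ with $\phi(P)\subseteq\{v:\ell(v)\ge -k\}$. The projection flip acts on $\Z^{2n}$ as $v\mapsto -v$, so flip-invariance of $P$ gives $\phi(P)=-\phi(P)$, whence also $\phi(P)\subseteq\{v:\ell(v)\le k\}$, and therefore $\phi(P)$ lies in the slab $\{v:|\ell(v)|\le k\}$. It thus suffices to show that $\phi(P)$ escapes every bounded slab; concretely, that $\phi(P)$ is unbounded in every direction, for which it is more than enough to prove $\phi(P)=\Z^{2n}$.

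The crux, and the step I expect to be the main obstacle, is this fatness claim. Writing $F(v,n)=\max\{h:|(v,h)|_S\le n\}$ for the largest central height reachable in $n$ steps while displacing the projection to $v$, membership $v\in\phi(P)$ translates into $F(v,n_i)-F(0^{2n},n_i)$ being bounded below along the subsequence. The delicate point is that $F(0^{2n},n)$ grows quadratically in $n$, so it is extremely sensitive to the step budget, and a naive ``walk to $v$ at the end'' estimate loses a linear-in-$n$ amount of height. What rescues the argument is the opposite phenomenon: a near-maximal height is realized by traversing an area-maximizing loop, and ending at a fixed $v\neq 0$ rather than closing that loop back to the origin costs no extra steps -- indeed it \emph{saves} the closing steps -- so one reaches the same height $F(0^{2n},n)$ (or more) while ending at any fixed $v$. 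This is exactly the behaviour visible in the explicit example $H_3\setminus(\{0\}^2\times\N)$, and verifying it for an arbitrary projection-symmetric $S$ draws on the same height-cost / ball-distortion analysis used for the connectedness results. I would isolate it as a lemma asserting $F(v,n)\ge F(0^{2n},n)-O(1)$ for each fixed $v$; granting this, every fixed $v$ lies in $\phi(P)$, so $\phi(P)=\Z^{2n}$, contradicting the slab containment and finishing the proof.
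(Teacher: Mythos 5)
Your construction of $P$ and your reduction are exactly the paper's: balls of radius $|(0^{2n},c_i)|_S-1$ around central elements, flip-invariance surviving the limit, grazing plus the half-space lemma, and the upgrade from the half-space $\{\ell\ge -k\}$ to the slab $\{|\ell|\le k\}$ via $\phi(P)=-\phi(P)$. That part is sound, and your insistence that one must defeat every \emph{slab} -- mere unboundedness of $\phi(P)$ proves nothing, since slabs are unbounded -- is a genuinely careful point; the paper's own concluding sentence only records that $\phi(P)$ contains points with $|a|+|b|$ arbitrarily large.

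The gap is exactly where you fear it is: the ``fatness'' lemma is not proved, and it cannot be waved through. It is not borrowable from the paper, because the height-cost analysis you invoke (the analogue of $\eta_n(x,y)\ge \eta_n(0,0)-O(1)$, columns being intervals, etc.) is carried out only for the single generating set $\{e,(\pm1,0,0),(0,\pm1,0)\}$ of $H_3$, via the explicit formula of Lemma~\ref{lem:WordMetric}; Theorem~\ref{thm:Heisenberg} is asserted for \emph{every} projection-symmetric $S$ in every dimension, where near-optimal configurations mix central generators with isoperimetric loops for an arbitrary polygonal norm and no formula is available. There is also a bookkeeping problem: since $F(0^{2n},\cdot)$ has jumps of size $\Theta(n)$, the inequality $F(v,n)\ge F(0^{2n},n)-O(1)$ applied with budget $r_i-1$ compares against $F(0^{2n},r_i-1)$, which can be smaller than the target height $c_i$ by a linear amount; this is repaired only if you actually use your parenthetical choice of $c_i$ at jumps of the central word norm (so that $F(0^{2n},r_i-1)=c_i-1$) \emph{and} know that the heights achievable over $v$ within radius $r_i-1$ have no large gaps below their maximum -- yet another unproved statement for general $S$. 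The instructive contrast with the paper is that it needs no quantitative height estimate at all: a geodesic from the grazing generator $s$ to the center $g_i$ stays inside the ball, its pointwise limit $p$ is an infinite geodesic inside $P$, and by the two soft lemmas (projections of geodesics are eventually $\Z^{2n}$-geodesics; Lemma~\ref{lem:ZdGeodesics}) its projection escapes to infinity at linear speed. Moreover the all-directions statement your slab step needs also comes for free from this: since $d(p_i(t),g_i)=|p_i|-t$, the whole ball $B_{S,t}(p_i(t))$ lies inside $B_{S,r_i-1}(g_i)$, so $P\supset B_{S,t}(p(t))$ for every $t$, and $\phi(B_{S,t}(p(t)))=B_{\phi(S),t}(\phi(p(t)))$ is a full ball of radius $t$ in $\Z^{2n}$, which eventually leaves any bounded slab. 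Replacing your fatness lemma by this observation closes your argument using only tools the paper already proves; keeping the fatness route would require substantial new work (though, if completed, it would yield the stronger conclusion $\phi(P)=\Z^{2n}$, matching the explicit $H_3$ example).
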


\begin{proof}[Proof of Theorem~\ref{thm:Heisenberg}]
Consider any set $P \subset H$ obtained as a limit point of balls of radius $|(0,0,c)|_S-1$ around $(0,0,c)$. Such $P$ contains some element of $S$ but not $(0,0,0)$, so it is a horoball. The projection of this horoball is not a limit of Busemann horoballs: by the lemma before the previous one, the projection of any geodesic from the $S$-neighborhood of $(0,0,0)$ must travel away from the origin at linear rate, so $P$ must contain $(a,b,c)$ with $|a|+|b|$ arbitrarily large. By projection symmetry of the generating set and the fact the projection flip fixes $(0,0,c)$, the projection flip fixes the balls around $(0,0,c)$, and thus $P$. It follows that $P$ is not a limit of Busemann horoballs by the previous lemma.
\end{proof}

\subsection{Explicit non-Busemann horoball in the Heisenberg}

We give a concrete example of what the horoball described in the previous section may look like, for the three-dimensional Heisenberg group $H = H_3$ under the generating set $S = \{(0,0,0), (1,0,0), (0,1,0), (-1,0,0), (0,-1,0)\} = \{e, \est, \nth, \wst, \sth\}$. We refer to the directions in $S$ informally as east, north, west and south; up and down refer to height, except that \emph{going down a geodesic} means walking along it. It is useful to think of elements of the Heisenberg group as paths moving in cardinal directions on the plane by unit-length moves, and identifying paths when they evaluate to the same element, i.e.\ reach the same element when lifted to the unique path from $e_{H_3}$.

By the \emph{double area} of a closed curve $p : \sphere^1 \to \R^2$ we mean twice the signed area of the region it delimits counterclockwise. It is well-known that the path $\hat w$, with $w \in S^*$ reaches precisely the element $(a,b,c)$, where $(a,b)$ is the endpoint of the path $\pi(\hat w)$ on the plane, and $c$ is the double area of the path in $\R^2$ obtained by composing $c$ with the straight line from $(a,b)$ to $(0,0)$. (Usually, area is used instead of the double area, but we prefer to work with integers rather than half-integers.)

It helps to have a formula for the word metric, and we give one.

\begin{lemma}
\label{lem:WordMetric}
Let $(a,b,c) \in H$. Then
\[ |(a,b,c)| = \min \{ 2(A + B) - (|a| + |b|) \;|\; 2AB - |a||b| \geq |c|, A \geq |a|, B \geq |b| \}. \]
\end{lemma}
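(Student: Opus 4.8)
The plan is to prove the formula $|(a,b,c)| = \min\{2(A+B) - (|a|+|b|) : 2AB - |a||b| \geq |c|,\ A \geq |a|,\ B \geq |b|\}$ by establishing the upper and lower bounds separately, using the geometric interpretation of Heisenberg elements as lattice paths on $\R^2$ whose height equals the double signed area. By the projection-flip and the analogous symmetries $(a,b,c) \mapsto (\pm a, \pm b, \pm c)$, we may assume $a, b, c \geq 0$, so that $|a| = a$, $|b| = b$ and the constraint $A \geq |a|, B \geq |b|$ becomes $A \geq a, B \geq b$. I would interpret $A$ and $B$ as the total horizontal and vertical extents of a rectangular excursion: a path that sweeps out a rectangle of dimensions $A \times B$ encloses double-area $2AB$, and then needs to ``give back'' the area $2AB - c$ we overshot while still landing at $(a,b)$.

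\textbf{Upper bound.} First I would construct, for each feasible pair $(A,B)$, an explicit path of length exactly $2(A+B) - (a+b)$ reaching $(a,b,c)$. The natural candidate is a staircase-type loop: go east $A$ steps, north $B$ steps, west $A - a$ steps, south $B - b$ steps (and possibly adjust), tracing a region whose double area is $2AB$, then shave off the excess $2AB - c$ by a locally area-reducing wiggle that does not change the net displacement or the step count. The bookkeeping here is that a path confined to a bounding box of width $A$ and height $B$ and ending at $(a,b)$ uses at least $2A - a$ horizontal and $2B - b$ vertical moves, i.e. $2(A+B) - (a+b)$ moves total, and this is achievable while realizing any double area in the range $\llb |a||b|, 2AB \rrb$ (the maximal enclosed double area being $2AB$ and the minimal, for a monotone staircase, being $|a||b|$). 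The condition $2AB - ab \geq c$ together with $c \geq $ (the minimum) is exactly what guarantees $c$ is attainable inside such a box, giving a path of the claimed length.

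\textbf{Lower bound.} Conversely, given any geodesic for $(a,b,c)$, let $A$ be the total number of east-or-west steps and $B$ the total number of north-or-south steps, so the length is $A' + B'$ where I set $A = \lceil A'/2 \rceil$-type extents; more carefully, if the path uses $e$ east and $w$ west steps then $e - w = a$ and the horizontal extent of the bounding box is at most $\max(e,w)$, and similarly vertically. Writing $\mathcal A = \max(e,w)$ and $\mathcal B = \max(n,s)$ for the maximal horizontal and vertical reach, the total length is $e + w + n + s \geq (2\mathcal A - a) + (2\mathcal B - b) = 2(\mathcal A + \mathcal B) - (a+b)$, and the enclosed double area $c$ cannot exceed $2\mathcal A \mathcal B$ (the area of the bounding box counted with multiplicity), while the staircase forcing gives $2\mathcal A\mathcal B - ab \geq c$. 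Thus $(\mathcal A, \mathcal B)$ is feasible in the minimization and the geodesic length is at least the minimum, completing the lower bound.

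\textbf{Main obstacle.} I expect the genuine difficulty to lie in the area estimate for the lower bound: showing rigorously that a path ending at $(a,b)$ whose horizontal reach is $\mathcal A$ and vertical reach is $\mathcal B$ cannot enclose double area exceeding $2\mathcal A\mathcal B - ab$. The bound $2\mathcal A\mathcal B$ (the full bounding box) is geometrically clear, but the sharper $-ab$ correction — needed to match the feasibility constraint — requires an isoperimetric-style argument that the endpoint displacement $(a,b)$ forces the enclosed region to miss at least an $ab$ wedge of the box. I would handle this either by a direct induction on path length (tracking how each step changes the signed area relative to the running bounding box) or by appealing to the ``shift'' formulas $(-v,0,0)(a,b,c)(v,0,0) = (a,b,c-2v\cdot b)$ already used above, which let me normalize the path to a canonical monotone staircase without increasing length and for which the area identity $c = 2\mathcal A\mathcal B - 2(\mathcal A - a)(\mathcal B - b) - \text{(corrections)}$ can be verified directly. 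The matching of this geometric quantity with the clean algebraic constraint $2AB - |a||b| \geq |c|$ is the crux that ties the two bounds together.
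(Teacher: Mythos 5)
Your upper bound follows the paper's proof essentially verbatim: reduce to $a,b,c \geq 0$ by symmetry, then for each feasible $(A,B)$ build a rectangle path of length $2(A+B)-(a+b)$ with a corner cut off and shave its double area down to $c$ by flipping turns. (One slip: such a path encloses double area at most $2AB-ab$, not $2AB$; the extremal rectangle already loses the $ab$ triangle at the corner, which is exactly why the feasibility constraint reads $2AB-ab\geq c$.) Your lower bound, however, takes a genuinely different route. The paper never analyzes an arbitrary word at all: it proves, via the long but purely mechanical case analysis of Section~\ref{sec:Calculations}, that the right-hand side $f(a,b,c)$ can decrease by at most $1$ under multiplication by a single generator, so that any word of length $\ell$ representing $(a,b,c)$ forces $\ell \geq f(a,b,c)$ because $f$ must decay to $f(0,0,0)=0$ in $\ell$ unit steps. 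You instead read a feasible witness directly off an arbitrary word: with step counts $e,w,n,s$ and $\mathcal{A}=\max(e,w)$, $\mathcal{B}=\max(n,s)$, the length is exactly $2(\mathcal{A}+\mathcal{B})-(a+b)$, so the entire lower bound reduces to the single isoperimetric-type inequality $c\leq 2\mathcal{A}\mathcal{B}-ab$. This trades the paper's case analysis for one geometric estimate, and if that estimate is in hand your proof is shorter and more conceptual; the paper's potential-function argument, by contrast, needs no geometry whatsoever, which is why it can be relegated to a routine appendix.

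That estimate is the one thing your proposal does not prove — you correctly flag it as the crux — and since it carries the whole lower bound, let me confirm it is true and close the gap along the lines of your first suggested strategy. Write $c=\sum_t\left(x_t\,\Delta y_t-y_t\,\Delta x_t\right)$, summing over the steps of the word; the closing segment from $(a,b)$ to the origin contributes nothing, since $x\,dy-y\,dx$ vanishes along a line through the origin. Each north step contributes $x_t\leq x_+$, each south step $-x_t\leq -x_-$, each east step $-y_t\leq -y_-$, each west step $y_t\leq y_+$, where $[x_-,x_+]\times[y_-,y_+]$ is the bounding box of the path, of width $W=x_+-x_-$ and height $H=y_+-y_-$. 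Hence, using $n=s+b$ and $e=w+a$,
\[ c \;\leq\; nx_+ - sx_- - ey_- + wy_+ \;=\; sW + wH + bx_+ - ay_-. \]
Now $x_-\leq 0$ gives $x_+\leq W$, and $y_+\geq b$ gives $-y_-\leq H-b$; moreover $W\leq\mathcal{A}$ and $H\leq\mathcal{B}$, because whichever extreme of the $x$-range is visited first, the path must afterwards traverse the full width in a single direction, so $W\leq\max(e,w)$, and similarly vertically. Therefore $c\leq s\mathcal{A}+w\mathcal{B}+b\mathcal{A}+a(\mathcal{B}-b)=n\mathcal{A}+e\mathcal{B}-ab=2\mathcal{A}\mathcal{B}-ab$, using $\mathcal{A}=e$, $\mathcal{B}=n$ (valid since $a,b\geq0$). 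With this inserted, your argument is complete and correct, and it replaces the whole of Section~\ref{sec:Calculations}; I would recommend writing it up this way rather than via the normalization-by-conjugation alternative you also mention, which would require controlling lengths under the rewriting and looks substantially more delicate.
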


%The reader is encouraged to think a bit about how geodesics might look like; it quickly becomes obvious that when $|c| \leq |ab|$ one should use a path whose projection moves straight, and otherwise one should use rectangular paths which are as square-like as possible. This quickly leads to the formula in the lemma, and the verification is then straightforward. Nevertheless, a naive algebraic verification has many boundary cases. This proof is given in an appendix; we encourage the reader to skip it, or come up with a smarter proof of their own.

\begin{proof}
By some symmetry considerations, it is enough to consider the case $(a,b,c)$ with $a,b,c \geq 0$. Denote the RHS of the formula by $f(a,b,c)$. For elements $(a,b,c)$ with $c \leq ab$ the claim is trivial: Then $f(a,b,c) = a+b$, and even in the natural projection, we cannot do better than this, so this is a lower bound. For the upper bound, it is easy to prove that any double area $c \leq ab$ can be reached by a geodesic that only moves north and to the east.

For $c \geq ab$, it is easy to see that for any $A \geq a, B \geq b$ there is a path of length $2(A + B) - (a + b)$ with double area $2AB - ab$, whose shape is a rectangle with a triangle cut off from the northwest corner. We can drop its area to $c$ without increasing its length by flipping counterclockwise turns to clockwise turns starting from the southeast corner.

To see the lower bound, we prove that movement by any of $\est,\nth,\wst,\sth$ can decrease the word norm by at most one. This can be checked by a straightforward but relatively long case analysis. These calculations can be found in Section~\ref{sec:Calculations}.
\end{proof}

\begin{remark}
While the formula is stated as an optimization problem, the solution is obvious -- $A$ and $B$ should be taken as close to each other as possible under the constraints, to maximize the area. We will argue in terms of the optimization problem, but one can write out a more explicit formula easily by working out some details parity. For $a \geq b \geq 0$ and $c \geq 0$, the following formula can be obtained:
\[ |(a,b,c)| = \left\{\begin{array}{ll}
a+b & \mbox{if } c \leq ab \\
2 \lceil \frac{c - ab}{2a} \rceil + a + b & \mbox{if } ab < c \leq 2a^2 - ab \\
2 (\lceil n/2 \rceil + \lfloor n/2 \rfloor) - a - b & \mbox{otherwise, where } n = \left\lceil 2 \sqrt{\frac{c+ab}{2}} \right\rceil
\end{array}\right. \]
The three regions correspond to the three ``regimes'' analyzed in the following section.
\end{remark}

A (presumably, and hopefully, equivalent) formula appears in \cite{Bl03}; we have not compared the proof to ours. The idea is fully explained in an answer of Derek Holt's on math.stackexchange.com \cite{Ho12}, and Holt suggests that ``It would be an interesting programming exercise and hard to get exactly right.'' We indeed performed such a programming exercise while writing this paper, to be able to stare at Heisenberg's balls; it is debatable whether it was interesting or hard.

\begin{theorem}
For $H = H_3$ under generating set $S = \{e,\est,\nth,\wst,\sth\}$, the set $P = H \setminus (\{0\} \times \{0\} \times \N)$ is a non-Busemann horoball.
\end{theorem}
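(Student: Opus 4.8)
The plan is to realise $P$ explicitly as a limit of balls centred on the central axis, and then to rule out the Busemann property using the projection-symmetry lemma proved above.

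Write $N(m) := |(0,0,m)|_S$. Setting $a=b=0$ in Lemma~\ref{lem:WordMetric} gives $N(m) = \min\{2(A+B) : 2AB \ge |m|,\ A,B \ge 0\}$; in particular $N$ depends only on $|m|$, is even-valued, and is nondecreasing in $|m|$. As $m$ grows it is constant on successive \emph{plateaus}, on each of which it jumps by exactly $2$, and a short computation shows that the plateau containing $m$ has length $\Theta(\sqrt m)$. Call $m$ a \emph{plateau bottom} if $N(m) = N(m-2)+2$. I would obtain the horoball as $P = \lim_i B_{S,r_i}(g_i)$ with centres $g_i = (0,0,-m_i)$ and radii $r_i = N(m_i)-1$, where $m_i \to \infty$ runs through plateau bottoms (of which there are infinitely many); note $r_i$ is the largest radius for which the ball misses $e_{H_3}$.

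First I would fix an element and determine its eventual membership. Since $g_i^{-1} = (0,0,m_i)$ and $(0,0,m_i)(a,b,c) = (a,b,m_i+c)$, the element $(a,b,c)$ lies in $B_{S,r_i}(g_i)$ iff $|(a,b,m_i+c)|_S \le N(m_i)-1$. For a central element $(0,0,c)$ this reads $N(m_i+c) \le N(m_i)-1$: if $c \ge 0$ then $N(m_i+c) \ge N(m_i)$ by monotonicity, so $(0,0,c)$ is excluded; if $c<0$ then, because $m_i$ is a plateau bottom, $m_i+c$ lies on a strictly lower plateau and $N(m_i+c) \le N(m_i)-2$, so $(0,0,c)$ is included once $m_i > |c|$. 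This is exactly the membership pattern defining $P$. For a non-central element ($(a,b) \ne (0,0)$) I would exploit the plateau slack: at a plateau bottom the extremal pair with $A+B = N(m_i)/2$ and maximal product $AB$ satisfies $2AB \ge m_i + \Theta(\sqrt{m_i})$, so for large $i$ it meets the constraint $2AB - |a||b| \ge m_i + c$ of Lemma~\ref{lem:WordMetric} (together with $A \ge |a|$, $B \ge |b|$, which hold since $A,B \approx N(m_i)/4 \to \infty$). Feeding this pair into the formula yields $|(a,b,m_i+c)|_S \le N(m_i) - (|a|+|b|) \le N(m_i)-1$, so $(a,b,c)$ is included. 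As every element is eventually in or out consistently with $P$, the balls converge to $P$ in $2^{H_3}$, and since $r_i \to \infty$ this exhibits $P$ as a horoball.

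Finally, $P$ is fixed by the projection flip $(a,b,c)\mapsto(-a,-b,c)$ (it removes only central elements), it grazes the identity (e.g. $(1,0,0)\in P$ while $e_{H_3}\notin P$), and $\phi(P)=\Z^2$ is not contained in any affine half-space of bounded stretch. By the projection lemma above, these facts are incompatible with $P$ being a limit of Busemann horoballs, and in particular $P$ is not Busemann. The main obstacle is the non-central inclusion step: naive estimates only give $|(a,b,m_i+c)|_S \le N(m_i)$, and it is precisely the choice of plateau bottoms -- providing $\Theta(\sqrt{m_i})$ of slack in the area constraint to absorb both the height offset $c$ and the penalty $|a||b|$ while still saving $|a|+|b| \ge 1$ in length -- that upgrades this to the strict inequality $\le N(m_i)-1$ needed to pin the limit down to exactly $P$.
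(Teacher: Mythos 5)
Your proof is correct and takes essentially the same route as the paper: the paper likewise realizes $P$ as a limit of balls centered at $(0,0,-(2N^2+1))$ --- which are precisely your plateau bottoms --- with radius one less than the distance to the identity, uses the slack in the word-metric formula (phrased there as an odd-radius/parity argument giving non-central columns $\Omega(n)$ extra height) to show every non-central element is eventually swallowed, and rules out the Busemann property via the same projection/half-space lemma. The only nitpick is parity bookkeeping: in the paper's coordinates central elements have even height (since $2\,|\,c \iff 2\,|\,a\cdot b$), so your centers $(0,0,-m_i)$ should be taken at \emph{even} plateau bottoms --- a slip the paper itself also makes with its odd-height center.
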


\begin{proof}
This is not a Busemann horoball by the previous section. We show that it is a horoball. Define the \emph{columns} of a subset of $H$ as the intersections with sets of the form $\{x\} \times \{y\} \times \Z$. Observe that by the formula for the word metric, the column at $(x,y)$ of a ball centered at the origin is of the form $\{x\} \times \{y\} \times \llb -n, n \rrb$.

Consider now the function $\eta_n(x,y) = \max \{z \;|\; (x, y, z) \in B_{S, n}\}$. From the formula for the word metric, we see that for bounded $|x|, |y|$ and large $n$, $\eta_n(x,y)$ depends only on $|x| + |y|$ up to an additive constant $C$ (which does not not depend on $n$): if $x, y \geq 0$ then substituting $x \rightarrow x+1$, $y \rightarrow y-1$, as long as we stay in the same quadrant the cost $2(A+B) - (x+y)$ does not change for any $A, B$, and the reached height changes by $2AB - xy - (2AB - (x+1)(y-1)) = (x+1)(y-1) - xy$, which is bounded by a constant (optimal solutions satisfy $A > x, B > y$ anyway, since the $z$ reaching the height is much larger than $x + y$). Now walk around the $\ell^1$-sphere on the plane and apply this argument on the boundedly many step.

On the other hand, if $n$ is large and odd, and $|x| + |y|$ is small, then $\eta_n(x,y) \geq \eta_n(0,0) + \Omega(n) - O(1)$: In optimal solutions, $A, B$ are much larger than $x, y$ anyway, and because $2(A + B) - (|x| + |y|)$ decreases as $|x| + |y|$ increases and $n$ is odd, we can increase $A$ or $B$ by at least one, leading to a linear increase in $2AB - ab$ since $A$ and $B$ are $\Omega(n)$. 

The ball around $(0,0,-2N^2-1)$ grazing the origin has radius $4N+2$: if $2(|A| + |B|) \leq 4N+1$ then actually $|A| + |B| \leq 2N$ and we can reach at most the product $2AB = 2N^2$, while with radius $4N+2$ we can set $A = 2N+1, B = 2N$ to reach $2(N+1)N = 2N^2 + 2N > 2N^2+1$. Since $(0,0,-1)$ is in such a ball and columns are intervals, $(0,0,i)$ is in the ball for $i \in \in \llb -2N^2-1, -1 \rrb$, while no $(0,0,i)$ with $i \geq 0$ is in the ball for the same reason. By the previous paragraph, all elements with $(x, y, n)$, $|x|+|y| \neq 0$ are eventually in such balls, and we conclude that in the limit as $N \rightarrow \infty$ we obtain the horoball $P$. 
\end{proof}

\subsection{Heisenberg's horoballs are connected}

We now give a proof that the three-dimensional Heisenberg group, under the generating set $S$ from the previous section, has connected horoballs. As discussed, what needs to be shown is that distortion in balls is bounded. This result has been previously proved by Shapiro \cite{Sh89}.

\begin{theorem}
For $S = \{e, \est, \nth, \wst, \sth\}$, every $S$-horoball in the Heisenberg group $H_3$ is connected.
\end{theorem}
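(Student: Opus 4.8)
By Lemma~\ref{lem:DisconnectedHoroball}, every $S$-horoball is $S$-connected if and only if the distortion functions $\Delta^{d_S}_{d_n}(\ell)$ of the balls $B_{S,n}(e)$ are pointwise bounded in $n$. Since $T = S$ here, it suffices to fix $\ell = 1$ and show the following uniform statement: there is a constant $D$ such that for every $n$ and every pair of adjacent points $g, g' \in B_{S,n}(e)$ (that is, $g' = gs$ for some $s \in S$), there is a path from $g$ to $g'$ inside $B_{S,n}(e)$ of length at most $D$. This is exactly the ``bounded local distortion'' reformulation of almost convexity, and by the compactness argument of Lemma~\ref{lem:DisconnectedHoroball} it yields connectedness of all horoballs.

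\textbf{The plan via the word-metric formula.} The whole proof rests on Lemma~\ref{lem:WordMetric} and the explicit three-regime formula in the Remark. The plan is to use the geometric picture: an element $(a,b,c)$ is realized by a rectangle-with-a-corner-cut path, with the optimal $A, B$ as balanced as possible subject to $2AB - |a||b| \geq |c|$. First I would reduce to $a, b, c \geq 0$ using the symmetries of $S$ (the generating set is invariant under the dihedral symmetries of the plane and under height reflection, and these act on balls centered at $e$). Then, given $g = (a,b,c) \in B_{S,n}$ and a generator $s$, I would compute $|gs|$ exactly via the formula and split into cases according to which of the three regimes $g$ lives in and whether $s$ moves the projection or only changes the height. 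In each case the formula changes $|g|$ by at most $1$ (this is precisely the content of the lower-bound half of Lemma~\ref{lem:WordMetric}), and the task is to exhibit an explicit short detour staying inside the ball.

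\textbf{Constructing the bounded detours.} The key observation is that the optimal realizing path of $g$ is a large nearly-square rectangular loop, and moving to an adjacent vertex $g'$ perturbs $(a,b,c)$ by a bounded amount. When $|g'| \le |g| \le n$ there is nothing to prove: concatenating $g \to e \to g'$-style reasoning is not needed since $g' \in B_{S,n}$ already, but I must connect them \emph{within} the ball. The real work is when $|g'| = |g|$ or $|g'| = |g| \pm 1$ with $|g| = n$: here I would build the detour by locally editing the rectangular realizing path of $g$ — e.g. shifting one side of the rectangle by a unit, or adjusting a single corner cut — to produce the realizing path of $g'$, and verify every intermediate vertex has word norm $\le n$. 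Because the realizing rectangle has side lengths $\Theta(\sqrt{c}) = \Theta(n)$, a unit perturbation of the target is absorbed by a bounded-length surgery near one corner, and the intermediate heights stay within the column-interval $\llb -\eta_n, \eta_n\rrb$ already analyzed in the previous theorem's proof (where $\eta_n(x,y)$ was shown to depend on $|x|+|y|$ up to an additive constant). I would quantify this by tracking how $\eta_n$ varies along the $\ell^1$-sphere.

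\textbf{Main obstacle.} I expect the hard part to be the boundary regime analysis: when $g$ sits exactly on the sphere $|g| = n$ in the third (area-dominated) regime, moving inward or tangentially can force the realizing rectangle's aspect ratio to change, and a naive surgery might temporarily leave the ball (the classic ``dead-end'' phenomenon that makes almost convexity fail for other generating sets and for $H_{2n+1}$ with $n \ge 2$, per \cite{Th92}). The crux is to show that for \emph{this} generating set no such obstruction occurs, i.e.\ that the monotonicity ``$\eta_n(x,y)$ is controlled by $|x|+|y|$ up to $O(1)$'' combined with the parity-dependent linear growth $\eta_n(0,0) + \Omega(n)$ near the axis is strong enough to route every unit detour through nearby columns without exceeding radius $n$. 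I would handle this by making the additive constant $C$ from the previous proof explicit and bounding, for each generator move, the detour length by an absolute constant depending only on $C$; this reduces the whole theorem to a finite verification that the three-regime formula admits bounded-length realizing-path surgeries, which is the routine but lengthy computation I would relegate to a calculations section.
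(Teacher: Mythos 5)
Your opening reduction contains a fatal error. In Lemma~\ref{lem:DisconnectedHoroball} the metric $d_n$ is the path metric of $B_{S,n}(e)$ as an \emph{induced subgraph} of the Cayley graph, so if $g, g' \in B_{S,n}(e)$ and $g' = gs$ with $s \in S$, then $g$ and $g'$ are already adjacent in that subgraph and $d_n(g,g') = 1$. Thus your ``uniform statement'' for $\ell = 1$ holds trivially with $D = 1$ in \emph{every} group and carries no information: $\Delta^{d}_{d_n}(1) = 1$ always. The content of almost convexity begins at $\ell = 2$, where two elements of the ball can be at distance $2$ while every connecting geodesic passes outside the ball. This genuinely happens here: $(0,4,24)$ and $(0,6,24)$, two north-steps apart, both lie in the $10$-ball, but the midpoint $(0,5,24)$ does not (the column at $(0,5)$ only reaches height $20$), and the shortest connecting path inside the ball has length $10$. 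There is a local-to-global principle (Cannon \cite{Ca87}: finiteness of the distortion at $\ell = 2$ implies finiteness at all $\ell$), but it requires $\ell = 2$, not $\ell = 1$, and even invoking it you would still have to prove the $\ell = 2$ case --- which is exactly the case your reduction discards as trivial. The paper avoids this issue altogether by fixing an arbitrary $m$ and proving boundedness for each $m$ directly.

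Even with the reduction repaired, the core of the argument is missing rather than merely deferred. Your proposed detour is a ``bounded-length surgery near one corner'' of the realizing rectangle, but a purely local edit cannot work in the hard cases: when $g$ and $g'$ have projections in different quadrants and both are deep in the third (area-dominated) regime, their canonical geodesics leave the origin in different directions (one initially heading south, the other east), and no bounded edit near a corner turns one realizing path into the other; the paper handles this by first descending in height from $(a,b,c)$ to $(a,b,c-2p)$ inside the ball and shifting an \emph{entire side} of the rectangle, with a separate argument for ``bumps''. More fundamentally, the mechanism that makes any detour fit inside the ball is quantitative, not local: one walks back along the two canonical geodesics $k$ steps (so one is $k$-deep inside the ball), notes that the accumulated height discrepancy between the tracked points is $O(rk)$ when the geodesics stay $r$-close, and repairs it by a loop enclosing that much signed area --- a square of side $O(\sqrt{rk})$ --- which fits precisely because $k \geq C\sqrt{rk}$. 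This ``go deep, then pay the height difference in area'' step, exploiting the central-extension structure, is the crucial observation of the paper's proof; tracking the additive constant $C$ in $\eta_n$ along the $\ell^1$-sphere, as you propose, gives no substitute for it, since near the sphere of radius $n$ there is simply no room to correct a height error of unbounded size.
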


\begin{proof}
It is enough to show that the distortion functions $m \mapsto \ell(m)$ of balls are pointwise bounded, and for this is it enough to show that for every $m$, there exists $n_0$ such that for every $n \geq n_0$, and any $g, g' \in B_{S,n}$ with $d(g,g') \leq m$, there is a path of length at most $\ell$ inside $B_{S,n}$ from $g$ to $g'$. Fix an $m$ once and for all; we will prove that $\ell$ and $n_0$ exist. We allow $O(\cdot)$- and $\Omega(\cdot)$-notations to depend on $m$.

The crucial observation is the following. Suppose $g = (a,b,c), g' = (a',b',c')$ are in the $n$-ball, and $d(g, g') \leq m$. If we can find geodesics $p_g : g \to (0,0,0)$ and $p_{g'} : g' \to (0,0,0)$ such that $d(p_g(i), p_{g'}(i)) \leq r$ for $i \in \llb 0, k \rrb$, and $k \geq C \sqrt{rk}$ for an absolute constant $C$, then $g$ and $g'$ are connected in the $n$-ball. Namely, follow the geodesics back for $k$ steps, and observe that by the interpretation of the Heisenberg group as a central extension of $\Z^2$ by a double area calculating cocycle, we can make up for the height difference by walking around a discrete approximation of a square. This fits in the $n$-ball, since going down a geodesic by $k$ steps necessarily moves $k$-deep into the ball.

We will fix for each $g = (a,b,c)$ a \emph{canonical geodesic} $p_g$ from $g$ to the origin. In most cases the canonical geodesics automatically satisfy the above, and we deal with the remaining cases separately.

Instead of giving the family of canonical geodesics directly, we work with a real relaxation: we pick canonical ``pseudogeodesics'' for all $(a,b,c) \in H_3$, which are piecewise linear curves in $\R^2$. It is useful to consider the curves with the $\ell^1$-parametrization, so infinitesimal movement in direction $(a,b) \in \sphere^1$ has instant cost $|a|+|b|$. We describe the pseudogeodesic for $a, b, c \in \R_{\geq 0}$, as $c$ grows from $0$ to infinity.

If $c \leq ab$, then let $L \subset \R^2$ be the unique piecewise linear path having four linear pieces, in directions $(1,0)$, $(a,b)$, $(0,1)$, $(-a,-b)$, which has double area $c$. The pseudogeodesic is the initial three pieces (in directions $(1,0)$, $(a,b)$, $(0,1)$) of $L$. The pseudogeodesics of this type are said to be in the \emph{first regime}.

Formulas for the points where the pseudogeodesic turns can be obtained by solving a quadratic, they are $(d, 0)$ and $(a,(1-d/a)b)$ where $d = a - \sqrt{a(a - c/b)}$. For $(10,10,75)$ we have $c = 75 \geq 100 = ab$ so we are in the first regime, and the pseudogeodesic looks like so:
\begin{center}
\begin{tikzpicture}[scale=0.25]
\draw[thick,->] (0,0) -- (5,0) -- (10,5) -- (10,10);
\end{tikzpicture}
\end{center}

At $c = ab$, the path $L'$ is an actual geodesic, $\est^a \nth^b$ as a word. From this point on, as the pseudogeodesic we use will correspond to optimal solutions to the optimization problem $\min \{2(A+B)-(a+b) \;|\; 2AB - ab \geq c, A \geq a, B \geq b\}$. It is not hard to see that there is a unique solution, namely if $m = \max(a,b)$ and $c \leq 2m^2 - ab$, if $m = a$, we should set $A = a$ and pick $B$ so that $2aB = c + ab$, in other words $B = (c + ab)/2a$. The pseudogeodesic is the piecewise linear curve $(0,0) \rightarrow (0,b-B) \rightarrow (a,b-B) \rightarrow (a,b)$. Symmetrically if $m = b$ we should set $B = b$ and $A = (c + ab)/2b$, and the piecewise linear curve should be $(0,0) \rightarrow (a-A,0) \rightarrow (a-A,b) \rightarrow (a,b)$. These are pseudogeodesics in the \emph{second regime}.

For example for $(10,5,100)$ we have $ab = 50 \leq 100 \leq 150 = 2m^2 - ab$, so we are in the second regime. The pseudogeodesic is $(0, 0) \rightarrow (0, -2.5) \rightarrow (10, -2.5) \rightarrow (10, 5)$:
\begin{center}
\begin{tikzpicture}[scale=0.25]
\draw[thick,->] (0, 0) -- (0, -2.5) -- (10, -2.5) -- (10, 5);
\end{tikzpicture}
\end{center}

If $c \geq 2m^2 - ab$, we should always set $A = B = \sqrt{\frac{c + ab}{2}}$, and the pseudogeodesic is the corresponding square, with the northwest triangle cut out, i.e.\ $(0,0) \rightarrow (0,b-B) \rightarrow (a-A, b-B) \rightarrow (a-A,b) \rightarrow (a,b)$. These are pseudogeodesics in the \emph{third regime}.

For example for $(10,10,150)$ we have $100 = c \leq 2m^2 - ab = 150$, so we are in the third regime. The pseudogeodesic is roughly $(0, 0) \rightarrow (0, -1.18) \rightarrow (11.18, -1.18) \rightarrow (10, 10)$:
\begin{center}
\begin{tikzpicture}[scale=0.25]
\draw[thick,->] (0,0) -- (0, -1.18) -- (11.18, -1.18) -- (11.18,10) -- (10,10);
\end{tikzpicture}
\end{center}

The pseudogeodesic to $g$ can be turned into an actual geodesic $p_g$ from $(0,0,0)$ to $g$ when $g \in H_3$: When $c \geq ab$, replace the path by a ``digital approximation'' $D \in \{\nth, \est\}^{a+b}$ of $L'$, which at all times stays at bounded from $L'$ in the Euclidean metric, and which has the same double area. To do this, observe that one can find approximations with more and with less double area by having the path follow $L'$ strictly above or below, and by turning right turns $\nth\est$ into left turns $\est\nth$, one can get the correct double area. When $c \geq ab$, again use a rectangle whose side lengths are best approximations to the pseudogeodesic, and add a single long \emph{bump} on the longer side if needed, for instance changing $\est^n$ to $\sth \est^k \nth \est^{n-k}$ changes the length by $2$ and the area by $2k$. Such paths are a subclass of the geodesics we described in Lemma~\ref{lem:WordMetric}, thus they are geodesics. It is clear that the canonical geodesics then stay at a bounded distance from the pseudogeodesic in the Hausdorff metric, and because of the choice of $\ell^1$-parametrization for the pseudogeodesic, they stay close also as parametrized curves.

In this desciption we have assumed $a, b, c \geq 0$. For $c < 0$ the process is inverted, and we use piecewise linear curves above the diagonal, otherwise the process is the same. For other octants the pseudogeodesics are obtained by rotating the ones for the first.

In the argument that follows, we use some jargon. The word metric is defined by optimizing $A, B$ under some constraint. We refer to $A, B$ as the \emph{side lengths}. In the second and third regime, it is useful to think of the formula for the word metric of $(a,b,c)$ as being solved in a dynamical way, happening in discrete steps. First, we set the side lengths $A = a, B = b$. Then, as long as $c < 2AB - ab$, we keep incrementing one side by one, and we call each such increment a \emph{steps}. The important thing to note is that finding the optimum of the discrete optimization problem is as simple as always incrementing the shorter side, breaking ties arbitrarily. We refer to this as the \emph{optimization process}. If $(a,b)$ is in the first quadrant, we refer to $B - b$ as the \emph{excess} (independently of the sign of $a - b$). When $b \leq a$, excess is a sign of being in the third regime.

First suppose $|a|, |b|$ are large and suppose $|c| \geq t(|a| + |b|)$ and $|c'| \geq t(|a'| + |b'|)$ for a large $t$ (depending on $m$). We consider the case $a, b, c > 0$, as other cases are symmetric. Now consider the optimization process for the canonical geodesics for $(a,b,c)$ and $(a',b',c')$. Since $c' = c \pm O(a + b)$, $a' = a \pm O(1)$ and $b' = b \pm O(1)$, the optimization processes halt $O(1)$ steps apart: once the canonical geodesic for $(a,b,c)$ has been reached, the process for $(a',b',c')$ is at most $O(1)$ steps behind (if $a' < a$ or $b' < b$). In $O(1)$ steps the side lengths $A'$ and $B'$ used for $(a',b',c')$ will necessarily reach $a'$ and $b'$, and after that grows by $\Omega(a + b)$ on each step. In particular, the excess differs by $O(1)$.

Now, if $(a,b)$ and $(a',b')$ are in the same quadrant, we are done: Suppose both are in the first quadrant. Then both paths move to the east for $x$ steps, where $x$ is the excess (so the difference in these numbers of steps is $O(1)$); and then move to the south for a number of steps that grows to infinity as a function of $t$ and $|a| + |b|$ (in the second and third regime, this number of steps grows with $a + b$, and in the first regime with $t$). If $t$ and the lower bound on $a + b$ are large enough, we can apply the observation from the second paragraph in this case.

Now suppose that still $|c| \geq t(|a| + |b|)$ and $|c'| \geq t(|a'| + |b'|)$, and $|a|, |b|$ are large, but that $(a,b)$ and $(a',b')$ are on different quadrants. By symmetry we may suppose $(a',b')$ is on the first quadrant and $(a,b)$ on the fourth, so $a, a' > 0$. In this case, $|b|, |b'| = O(1)$. The previous argument works for the most part, but we have an additional problem: if one of $(a,b,c)$, $(a',b',c')$ is deep into the third regime, i.e.\ the excess is very large (so in fact both have to be in the third regime), then the canonical geodesic for $(a,b,c)$ initially moves down (possibly after a bump of thickness one), and the canonical geodesic for $(a',b',c')$ moves to the right.

In this case, first suppose that there is no bump in the canonical geodesic. Then move down in height from $(a,b,c)$ to $(a,b,c-2p)$ for sufficient $p$ so that the canonical geodesic can be shifted according to the following figure to a geodesic for $(a,b,c-2p)$:
\begin{center}
\begin{tikzpicture}[scale=0.35]
\draw[thick,->] (0, 0) -- (-6.433981132056603, 0) -- (-6.433981132056603, -9.433981132056603) -- (3, -9.433981132056603) -- (3, -1);
\draw[thick,->] (0, 0) -- (0, -8.460443964212251) -- (9.460443964212251, -8.460443964212251) -- (9.460443964212251, 0.3) -- (4, 0.3);
\node[above] () at (0,0) {$(0,0,0)$};
\node[right] () at (3,-1) {$(a,b,c)$};
\node[above] () at (4,0.3) {$(a',b',c')$};
\node() at (11.5,-4) {$\implies$};
\end{tikzpicture}
\begin{tikzpicture}[scale=0.35]
\draw[thick,->] (0, 0) -- (-2.433981132056603, 0) -- (-2.433981132056603, -9.433981132056603) -- (7, -9.433981132056603) -- (7, -1) -- (3, -1);
\draw[thick,->] (0, 0) -- (0, -8.460443964212251) -- (9.460443964212251, -8.460443964212251) -- (9.460443964212251, 0.3) -- (4, 0.3);
\node[above] () at (0,0) {$(0,0,0)$};
\node[below] () at (3,-1) {$(a,b,c-2p)$};
\node[above] () at (4,0.3) {$(a',b',c')$};
\end{tikzpicture}
\end{center}
The vertical difference stays $O(1)$, while we can add make the geodesic end with an arbitrarily long suffix of $\wst$-moves. Since adding the segment of length $k$ only increases the height difference linearly in $k$, the argument from the second paragraph applies.

Of course, a small argument is needed to show that $(a,b,c-2p)$ is in the $n$-ball, and that we can move down in height within in the $n$-ball. To step down by $2$ in $12$ steps, walk down along the canonical geodesic of $g$ for $4$ steps, step down in height by moving around a cycle, and then walk back along the reverse of the same geodesic path of length $4$. The formula for the word metric shows that this path stays fully inside the ball since the height stays large at all times due to $c \geq t(a + b)$ (note that the \emph{columns}, i.e.\ fibers of the natural projection, are connected by the formula for the word metric).

Finally, if there is a bump, without changing the area, endpoints or perimeter (thus keeping geodesicity) we can repeatedly erase the bump from the west and introduce a bump of equal length on the east, until the bump on the west is longer than the eastmost side. By moving down in height, we can drop the length of the bump and keep moving it on the east side.
 
We have now dealt with the cases where $|a|+|b|$ is large enough and $c \geq t(|a| + |b|), |c'| \geq t(|a'| + |b'|)$ for some (unspecified, large) $t$. For any fixed $t$, we can fix with the cases where $c \leq t(|a| + |b|)$ or $c' \leq t(|a'| + |b'|)$ for large $|a| + |b|$ easily: The pseudogeodesics for $(a,b,c)$ and $(a',b',c')$ 
both stay at a bounded distance from the line from $(0,0)$ to $(a,b)$, so we can apply the argument from the second paragraph.

We have now dealt with all cases where $|a|+|b|$ is large enough. We still need to bound distortion near the origin. It is now useful to think of also the bound for $|a|$ and $|b|$ as a constant, i.e.\ $|a|, |b| = O(1)$. Consider such $g = (a,b,c), g' = (a',b',c')$ with $d(g, g') \leq m$, where $|a|, |b|, |a'|, |b'| = O(1)$. Again by symmetry we may assume $a, b, c \geq 0$. Necessarily $|c|$ (thus $c$ by assumption) is very large if $n_0$ is, as otherwise we are nowhere near the $n$-sphere and any geodesic from $g$ to $h$ is contained inside $B_{S,n}$. Note that necessarily $c'$ is also large.

Now, walk back along the canonical geodesics from $g$ or $g'$ (for some bounded number of steps) so that both are on the same $\ell^1$-sphere in the natural projection ($c, c'$ are large, so the geodesics cannot stay close to the origin for a long time). Note that since $c$ and $c'$ are large, $g$ is connected to $g (0,0,-2)$ and $g'$ is connected to $g'(0,0,-2)$ in at most $12$ steps inside the $n$-ball.

As in the previous section, consider the function $\eta(x,y) = \max \{z \;|\; (x, y, z) \in B_{S, n}\}$ and recall that $\eta(x,y)$ depends only $|x| + |y|$ up to an additive constant $C$. As in the previous section, we see that as $|x| + |y|$ increases, $\eta(x, y)$ grows linearly. Due to parity issues, it may not grow on every step. Nevertheless, it is clear that the height cannot drop by more than by a constant amount in a single step. Thus, for large $n$, for two elements $g, g'$ whose projections are on the same $\ell^1$-sphere, we can first drop down in height by a sufficient constant amount (as we did above to move from $(a,b,c)$ to $(a,b,c-2p)$). Then any shortest path, which does not enter the $\ell^1$-ball encircled by the $\ell^1$-sphere containing $g$ and $g'$, is contained in the $n$-ball.
\end{proof}

It seems likely that there are shorter proofs than the one given here, but it seems unlikely that there is a completely trivial proof, since the geometric phenomena guiding distortion only appear on the large scale. For example, the following paths are the geodesics from $(0,4,24)$ to $(0,6,24) = (0,4,24) \nth\nth$ inside the $10$-ball -- the maximal height in the column at $(0,5)$ is $\eta(0,5) = 20$ so $(0,4,24) \nth = (0,5,24)$ is not in this ball, and we have to take one of four detours.

\begin{center}
\begin{tikzpicture}[baseline = 0,scale=0.5]
\draw[gray,shift={(0.5,0.5)}] (-1,-3) grid (3,2);
\draw[thick, rounded corners=1ex,->] (0,0) -- ++(1,0) -- ++(1,0) -- ++(1,0) -- ++(0,-1) -- ++(-1,0) -- ++(-1,0) -- ++(0,1) -- ++(0,1) -- ++(0,1) -- ++(-1,0);
\end{tikzpicture}
\;\;\;
\begin{tikzpicture}[baseline = 0,scale=0.5]
\draw[white!20!black,shift={(0.5,0.5)}] (-1,-3) grid (3,2);
\draw[thick, rounded corners=1ex,->] (0,0) -- ++(1,0) -- ++(0,1) -- ++(1,0) -- ++(0,-1) -- ++(0,-1) -- ++(-1,0) -- ++(0,1) -- ++(0,1) -- ++(0,1) -- ++(-1,0);
\end{tikzpicture}
\;\;\;
\begin{tikzpicture}[baseline = 0,scale=0.5]
\draw[white!20!black,shift={(0.5,0.5)}] (-1,-3) grid (3,2);
\draw[thick, rounded corners=1ex,->] (0,0) -- ++(1,0) -- ++(1,0) -- ++(0,-1) -- ++(0,-1) -- ++(-1,0) -- ++(0,1) -- ++(0,1) -- ++(0,1) -- ++(0,1) -- ++(-1,0);
\end{tikzpicture}
\;\;\;
\begin{tikzpicture}[baseline = 0,scale=0.5]
\draw[white!20!black,shift={(0.5,0.5)}] (-1,-3) grid (3,2);
\draw[thick, rounded corners=1ex,->] (0,0) -- ++(1,0) -- ++(0,1) -- ++(1,0) -- ++(1,0) -- ++(0,-1) -- ++(-1,0) -- ++(-1,0) -- ++(0,1) -- ++(0,1) -- ++(-1,0);
\end{tikzpicture}
\end{center}

The proof given by Shapiro in \cite{Sh89} for the Heisenberg group being almost convex does not look essentially simpler than our proof above, but we admit that we have not studied it in detail.

Based on calculations with small balls, we conjecture that the initial values of the distortion function of large enough balls (in the sense of Lemma~\ref{lem:DisconnectedHoroball}) are
\[ (\ell(1), \ell(2), \ell(3), \ell(4), \ell(5)) = (1, 10, 11, 16, 17). \]
The vague hunch of the author is that $\ell$ grows linearly.

Note that \cite{Ca87} shows that as soon as $\ell(2)$ is finite, all values of $\ell$ are finite, and one can obtain an upper bound directly from the argument, but it is not clear how to obtain a subexponential bound.

\subsection{Calculations for $f(a,b,c) \leq (a,b,c)$}
\label{sec:Calculations}

We complete the calculations needed for the proof of Lemma~\ref{lem:WordMetric} and show that $f(a,b,c) \leq (a,b,c)$ when $a,b,c \geq 0$.

Consider first $(a,b,c) \wst = (a-1,b,c+b) = (a',b',c')$.  Suppose that $f(a',b',c') \leq f(a,b,c) - 2$. Suppose first that additionally $a' \geq 0$. Then
\begin{align*}
f(a',b',c') &= \min \{ 2(A + B) - (a' + b') \;|\; 2AB - a'b' \geq c', A \geq a', B \geq b' \} \\
&= \min \{ 2(A + B) - (a-1 + b) \;|\; 2AB - (a-1)b \geq c+b, A \geq a-1, B \geq b \} \\
&= \min \{ 2(A + B) - (a + b) + 1 \;|\; 2AB - ab \geq c, A \geq a-1, B \geq b \} \\
&\leq f(a,b,c) - 2 \\
&= \min \{ 2(A + B) - (a + b) - 2 \;|\; 2AB - ab \geq c, A \geq a, B \geq b \} \\
&\leq \min \{ 2(a + B) - (a + b) - 2 \;|\; 2aB - ab \geq c, B \geq b \} 
\end{align*}
where we restrict to $A = a$ in the last inequality. The minimum for $f(a',b'c')$ must be reached at $A = a-1$ (or we could not possibly have even $f(a',b',c') \leq f(a,b,c)$), so we get
\begin{align*}
f(a',b',c') &= \min \{ 2(A + B) - (a + b) + 1 \;|\; 2AB - ab \geq c, A \geq a-1, B \geq b \} \\
&= \min \{ 2(a - 1 + B) - (a + b) + 1 \;|\; 2(a-1)B - ab \geq c, B \geq b \} \\
&= \min \{ 2(a + B) - (a + b) - 1 \;|\; 2aB - 2B - ab \geq c, B \geq b \} \\
&\leq \min \{ 2(a + B) - (a + b) - 2 \;|\; 2aB - ab \geq c, B \geq b \} 
\end{align*}
where the last inequality comes from the chain of inequalities above. This is absurd, since $b \geq 0$ implies that $2aB - 2B - ab \geq c$ is a more restrictive condition than $2aB - ab \geq c$.

Suppose next that $a' < 0$, so in fact $a = 0$ and $|a'| = |-1| = 1$. Then $f(a',b',c') \leq f(a,b,c) - 2$ implies
\begin{align*}
f(a',b',c') &= \min \{ 2(A + B) - b - 1 \;|\; 2AB - b \geq c+b, A \geq 1, B \geq b \} \\
&\leq \min \{ 2(A + B) - b - 2 \;|\; 2AB \geq c, A \geq 0, B \geq b \}
\end{align*}
which is preposterous, because $2AB - b \geq c+b \wedge A  \geq 1$ is a more restrictive condition than $2AB \geq c \wedge A \geq 0$.

Consider then $(a,b,c) \est = (a+1,b,c-b) = (a',b',c')$. Suppose first that $a \geq 1$ so that $c' \geq 0$. If $f(a',b',c') \leq f(a,b,c) - 2$ then
\begin{align*}
f(a',b',c') &= \min \{ 2(A + B) - (a' + b') \;|\; 2AB - a'b' \geq c', A \geq a', B \geq b' \} \\
&= \min \{ 2(A + B) - (a + 1 + b) \;|\; 2AB - (a+1)b \geq c-b, A \geq a+1, B \geq b \} \\
&= \min \{ 2(A + B) - (a + b) - 1 \;|\; 2AB - ab \geq c, A \geq a+1, B \geq b \} \\
&\leq f(a,b,c) - 2 \\
&= \min \{ 2(A + B) - (a + b) - 2 \;|\; 2AB - ab \geq c, A \geq a, B \geq b \}
\end{align*}
which is insane, since the condition $A \geq a+1$ is more restrictive than $A \geq a$.

Suppose then that $a = 0$ and $c - b < 0$. Then
\begin{align*}
f(a',b',c') &= \min \{ 2(A + B) - (a' + b') \;|\; 2AB - a'b' \geq |c'|, A \geq a', B \geq b' \} \\
&= \min \{ 2(A + B) - (a + 1 + b) \;|\; 2AB - (a+1)b \geq b-c, A \geq a+1, B \geq b \} \\
&= \min \{ 2(A + B) - (a + b) - 1 \;|\; 2AB - ab \geq 2b - c, A \geq a+1, B \geq b \} \\
&\leq f(a,b,c) - 2 \\
&= \min \{ 2(A + B) - (a + b) - 2 \;|\; 2AB - ab \geq c, A \geq a, B \geq b \}
\end{align*}
But this is ridiculous, since $c - b < 0 \implies 2b - c > c$, so the former set of conditions is at least as restrictive as the latter.

Consider then $(a,b,c) \nth = (a,b+1,c+a) = (a',b',c')$. Then
\begin{align*}
f(a',b',c') &= \min \{ 2(A + B) - (a' + b') \;|\; 2AB - a'b' \geq c', A \geq a', B \geq b' \} \\
&= \min \{ 2(A + B) - (a + b) - 1 \;|\; 2AB - ab \geq c + 2a, A \geq a, B \geq b+1 \} \\
&\leq f(a,b,c) - 2 \\
&= \min \{ 2(A + B) - (a + b) - 2 \;|\; 2AB - ab \geq c, A \geq a, B \geq b \}
\end{align*}
is pure lunacy.

Consider then $(a,b,c) \sth = (a,b-1,c-a) = (a',b',c')$. Suppose first that additionally $b' \geq 0$ and $c - a \geq 0$. Then
\begin{align*}
f(a',b',c') &= \min \{ 2(A + B) - (a' + b') \;|\; 2AB - a'b' \geq c', A \geq a', B \geq b' \} \\
&= \min \{ 2(A + B) - (a + b) + 1 \;|\; 2AB + 2a - ab \geq c, A \geq a, B \geq b-1 \} \\
&\leq f(a,b,c) - 2 \\
&= \min \{ 2(A + B) - (a + b) - 2 \;|\; 2AB - ab \geq c, A \geq a, B \geq b \} \\
&= \min \{ 2(A + B + 1) - (a + b) - 2 \;|\; 2A(B+1) - ab \geq c, A \geq a, B+1 \geq b \} \\
&= \min \{ 2(A + B) - (a + b) \;|\; 2AB + 2A - ab \geq c, A \geq a, B \geq b-1 \}.
\end{align*}
Under $A \geq a, B \geq b-1$, the condition $2AB + 2a - ab \geq c$ is stricter than $2AB + 2A - ab \geq c$, so this is absolute madness.

Suppose next that $b' < 0$, so in fact $b = 0$ and $b' = -1$, but $c - a \geq 0$. Then
\begin{align*}
f(a',b',c') &= \min \{ 2(A + B) - (a + 1) \;|\; 2AB - a \geq c - a, A \geq a, B \geq 1 \} \\
&= \min \{ 2(A + B) - a - 1 \;|\; 2AB \geq c, A \geq a, B \geq 1 \} \\
&\leq f(a,b,c) - 2 \\
&= \min \{ 2(A + B) - a - 2 \;|\; 2AB \geq c, A \geq a, B \geq 0 \}
\end{align*}
is just nuts.

Suppose then that $b' \geq 0$ but $c - a < 0$. Then
\begin{align*}
f(a',b',c') &= \min \{ 2(A + B) - (a' + b') \;|\; 2AB - a'b' \geq c', A \geq a', B \geq b' \} \\
&= \min \{ 2(A + B) - (a + b) + 1 \;|\; 2AB - a(b - 1) \geq a - c, A \geq a, B \geq b-1 \} \\
&= \min \{ 2(A + B) - (a + b) + 1 \;|\; 2AB - ab \geq 2a - c, A \geq a, B \geq b-1 \} \\
&\leq f(a,b,c) - 2 \\
&= \min \{ 2(A + B) - (a + b) - 2 \;|\; 2AB - ab \geq c, A \geq a, B \geq b \}.
\end{align*}
Since $c - a < 0 \implies 2a - c > c$, $f(a',b',c')$ must be minimized at $B = b-1$, so
\begin{align*}
f(a', b', c') &= \min \{ 2(A + b - 1) - (a + b) + 1 \;|\; 2A(b-1) - ab \geq 2a - c, A \geq a \} \\
&= \min \{ 2(A + b) - (a + b) - 1 \;|\; 2Ab - 2A - ab \geq 2a - c, A \geq a \} \\
&\leq \min \{ 2(A + b) - (a + b) - 2 \;|\; 2Ab - ab \geq c, A \geq a \}.
\end{align*}
where the last inequality is $f(a,b,c)-2$ specialized to $B = b$. But that's crazy.

Suppose finally that $b' < 0$ but $c - a < 0$.
\begin{align*}
f(a',b',c') &= \min \{ 2(A + B) - (a' + |b'|) \;|\; 2AB - a'b' \geq |c'|, A \geq a', B \geq b' \} \\
&= \min \{ 2(A + B) - a - 1 \;|\; 2AB \geq 2a - c, A \geq a, B \geq 1 \} \\
&\leq f(a,b,c) - 2 \\
&= \min \{ 2(A + B) - a - 2 \;|\; 2AB \geq c, A \geq a, B \geq 0 \}.
\end{align*}
Sulaa hulluutta.

\bibliographystyle{plain}
\bibliography{../../../bib/bib}{}

\end{document}